\documentclass[12pt,reqno]{amsart}
\usepackage{amsmath}
\usepackage{amsfonts}
\usepackage{amssymb}
\usepackage{amsthm}
\usepackage{amscd}
\usepackage{amsbsy}
\usepackage[usenames,dvipsnames,svgnames,x11names,hyperref]{xcolor}
\usepackage{geometry}
\usepackage{graphicx}
\usepackage{hyperref}
\hypersetup{backref=true,
pagebackref=true,
hyperindex=true,
colorlinks=true,
breaklinks=true,
urlcolor=NavyBlue,
linkcolor=Fuchsia,
bookmarks=true,
bookmarksopen=false,
filecolor=black,
citecolor=ForestGreen,
linkbordercolor=red
}
\theoremstyle{plain}
\newtheorem{theorem}{Theorem}[section]
\newtheorem{corollary}[theorem]{Corollary}
\newtheorem{lemma}[theorem]{Lemma}
\newtheorem{proposition}[theorem]{Proposition}
\theoremstyle{definition}

\numberwithin{equation}{section}
\allowdisplaybreaks
\AtBeginDocument{\hypersetup{pdfborder={0 0 0.01}}}
\begin{document}
\title[Sharp stability hierarchies of the $L^2$-Poincar\'e inequalities]{Sharp $L^2$-stability and gradient stability hierarchies of the $L^2$-Poincar\'e inequalities on Euclidean balls and  Gaussian Poincar\'e inequality}
\author{Nguyen Lam}
\address{Nguyen Lam: School of Science and the Environment, Grenfell Campus, Memorial
University of Newfoundland, Corner Brook, NL A2H5G4, Canada}
\email{nlam@mun.ca}
\author{Guozhen Lu}
\address{Guozhen Lu: Department of Mathematics, University of Connecticut, Storrs, CT
06269, USA}
\email{guozhen.lu@uconn.edu}
\date{\today}
\subjclass[2020]{26D10, 35P15, 46E35, 33C10, 33C45, 60E15}
\keywords{Poincar\'e inequality; sharp constant; stability; gradient stability; Neumann eigenvalue; spectral gap; Bessel function; Gaussian measure; Hermite polynomial}
\begin{abstract}
We study the sharp $L^2$-Poincar\'e inequalities on Euclidean balls and with respect to the Gaussian measure, and we focus on their stability. In both settings, the sharp constant is the first nonzero eigenvalue of a self-adjoint operator and the set of optimizers is a finite dimensional linear space. Using the spectral decomposition, we establish sharp $L^2$-stability estimates and sharp gradient stability estimates, together with the stability of the stability inequalities in both norms, with explicit optimal constants and with the complete characterization of the equality cases. We then show that this process can be continued. The Poincar\'e deficit is equal to an infinite sum of $L^2$-distances to the increasing spaces of optimizers of the successive stability inequalities. It is also equal to an infinite sum of $L^2$-gradient distances to the same spaces. The coefficients of the first sum are the spectral gaps $\mu_j-\mu_{j-1}$, and the coefficients of the second sum are $\mu_1$ times the gaps of the reciprocal spectrum, $\mu_1\left(\mu_{j-1}^{-1}-\mu_j^{-1}\right)$. On the Euclidean ball, all the constants are expressed through the roots of Bessel functions, and we prove that the first two spectral levels are given by the degree one and the degree two Neumann modes in all dimensions, while the third level is given by the first radial mode when $N=2,3$ and by the degree three modes when $N\geq4$. In the Gaussian case, all the constants are rational numbers and the $L^2$-sum has all coefficients equal to $1$; we prove that this property characterizes the arithmetic spectrum.
\end{abstract}
\maketitle
\section{Introduction}

In \cite{BL85}, Brezis and Lieb raised the question whether the deficit of the sharp Sobolev inequality, that is, the difference between the energy and the optimal constant times the square of the critical $L^{2^*}$-norm, can be bounded from below by the distance from $u$ to the manifold of optimizers. This question was solved affirmatively by Bianchi and Egnell \cite{BE91}, who established the stability estimate
\begin{equation}\label{StabilityS}
\int_{\mathbb{R}^{N}} |\nabla u|^{2}\,dx
- S_{N}\!\left(
\int_{\mathbb{R}^{N}} |u|^{\frac{2N}{N-2}}\,dx
\right)^{\!\frac{N-2}{N}}
\ge c_{BE}
\inf_{U \in E_{Sob}}
\int_{\mathbb{R}^{N}} |\nabla(u - U)|^{2}\,dx,
\end{equation}
where $S_N$ is the optimal Sobolev constant, $E_{Sob}$ is the set of extremal functions and $c_{BE}>0$ is a universal constant. Since then, the stability of functional and geometric inequalities has become a very active research direction. We refer the interested readers to \cite{BE91, BL85, CFLL24, CLT-JFA, CLT24, CLT-MA, CLT-AIM2025, CLTW, DEFFL, DFLL23, DN25, DoLL26, Fat21, FIL16, FN19, FZ-Duke, LLR25, LLR26, MV21, VHN16} and the references therein; this list is far from being complete. We mention in particular that the determination of the sharp stability constants is in general a difficult problem, and that explicit sharp stability constants are known only in a few situations, see for instance \cite{CFLL24, DEFFL, DoLL26, LLR25}.

Poincar\'e inequalities are also a basic tool in analysis and in partial differential equations, and they have been studied in many settings beyond the Euclidean one. For instance, Poincar\'e and Sobolev--Poincar\'e inequalities for H\"ormander vector fields, including weighted versions and the endpoint case $p=1$, were established in \cite{Jerison-Duke, Lu92, Lu94, FLW95a, FLW95b}; the relationship between Poincar\'e type inequalities and representation formulas in spaces of homogeneous type was studied in \cite{FLW96}; Poincar\'e inequalities on product spaces were obtained in \cite{LW98}. In these works, the main questions are the validity of the inequality, the range of the exponents and the dependence of the constant on the geometry. In such general settings, the sharp constants and the optimizers are usually not known, and hence the stability question cannot be formulated as in \eqref{StabilityS}. An exception is the global Poincar\'e inequality on the Heisenberg group $\mathbb{H}^n$ studied in \cite{DLS07}, where it was proved that the best constant and the extremals are the same as those of the Sobolev inequality on $\mathbb{H}^n$, so that, when $p=2$, they are given explicitly by the result of Jerison and Lee \cite{JL88}.

In this paper, we are interested in the stability of the sharp $L^2$-Poincar\'e inequality in two model settings: the Euclidean ball and the Gaussian measure. Let
\[
B_R:=\{x\in\mathbb{R}^N:\ |x|<R\},\qquad N\geq2,
\]
and let $u_{B_R}:=\frac{1}{|B_R|}\int_{B_R}u\,dx$. The sharp $L^2$-Poincar\'e inequality on $B_R$ reads
\begin{equation}\label{PoinBall}
\int_{B_R}|\nabla u|^2\,dx\geq \mu_1(B_R)\int_{B_R}|u-u_{B_R}|^2\,dx,\qquad u\in W^{1,2}(B_R),
\end{equation}
where $\mu_1(B_R)$ is the first nonzero eigenvalue of the Neumann Laplacian
\begin{equation}\label{Neumann}
\begin{cases}
-\Delta u=\lambda u & \text{in } B_R,\\
\partial_\nu u=0 & \text{on } \partial B_R.
\end{cases}
\end{equation}
It is well known that $\mu_1(B_R)=\alpha_N^2/R^2$, see Szeg\"{o} \cite{Sz54}, Weinberger \cite{Wei56} and \cite[Chapter 7]{Hen06}, where $\alpha_N$ is the first positive root of the equation
\begin{equation}\label{alphaN}
J_{N/2}(\alpha)=\alpha J_{N/2+1}(\alpha).
\end{equation}
Here and throughout the paper, for $\nu\geq0$,
\[
J_\nu(\rho):=\sum_{k\geq0}\frac{(-1)^k}{k!\,\Gamma(\nu+k+1)}\left(\frac{\rho}{2}\right)^{2k+\nu}
\]
denotes the Bessel function of the first kind of order $\nu$, that is, the solution of the Bessel equation
\[
\rho^2y''(\rho)+\rho y'(\rho)+(\rho^2-\nu^2)y(\rho)=0
\]
which is regular at the origin, and $0<j_{\nu,1}<j_{\nu,2}<\cdots$ denote its positive zeros. We will use the recurrence relations
\begin{equation}\label{recurrence}
\rho J_\nu'(\rho)=\nu J_\nu(\rho)-\rho J_{\nu+1}(\rho)=\rho J_{\nu-1}(\rho)-\nu J_\nu(\rho),
\end{equation}
which give in particular $\rho J_{\nu+1}(\rho)=2\nu J_\nu(\rho)-\rho J_{\nu-1}(\rho)$. We refer to Watson \cite{Wat44} for these relations and for the other classical facts on Bessel functions that we use below.

We now return to \eqref{PoinBall}. It is also classical, see \cite{Sz54, Wei56, Hen06}, that the equality holds if and only if
\[
u(x)=c+|x|^{1-N/2}J_{N/2}\left(\frac{\alpha_N|x|}{R}\right)\frac{x\cdot \mathbf{a}}{|x|},\qquad c\in\mathbb{R},\ \mathbf{a}\in\mathbb{R}^N.
\]
In particular, for every nonconstant optimizer $u$, the function $u-u_{B_R}$ is antisymmetric with respect to the hyperplane $\{x\cdot\mathbf{a}=0\}$; moreover $u$ is axially symmetric about the axis $\mathbb{R}\mathbf{a}$ and strictly monotone in the polar angle, so that it attains its maximum and its minimum at two antipodal points of $\partial B_R$. See Gir\~ao and Weth \cite{GW06}, where these properties are shown to persist for the Poincar\'e--Sobolev family. We will discuss this family at the end of Section \ref{sec:ball}.

The Gaussian counterpart of \eqref{PoinBall} is the classical Gaussian Poincar\'e inequality
\begin{equation}\label{PoinGauss}
\int_{\mathbb{R}^n}|\nabla u|^2\,d\gamma\geq \inf_{c\in\mathbb{R}}\int_{\mathbb{R}^n}|u-c|^2\,d\gamma,\qquad d\gamma=(2\pi)^{-n/2}e^{-\frac{|x|^2}{2}}dx,
\end{equation}
whose sharp constant is $1$ and whose optimizers are exactly the affine functions $c+\mathbf{a}\cdot x$. The inequality \eqref{PoinGauss} has been established by several classical methods, such as the Hermite polynomial expansion, the semigroup interpolation, and the linearization of the logarithmic Sobolev inequality; see \cite{BGL14, GRO75}.

The stability of the Gaussian Poincar\'e inequality \eqref{PoinGauss} was studied recently in \cite{LLR25}, where the authors proved that
\begin{equation}\label{LLRstab}
\begin{split}
\int_{\mathbb{R}^n}|\nabla u|^2\,d\gamma-\inf_{c\in\mathbb{R}}\int_{\mathbb{R}^n}|u-c|^2\,d\gamma&\geq \frac12\inf_{\mathbf{d}\in\mathbb{R}^n}\int_{\mathbb{R}^n}|\nabla u-\mathbf{d}|^2\,d\gamma\\
&\geq \inf_{c\in\mathbb{R},\ \mathbf{a}\in\mathbb{R}^n}\int_{\mathbb{R}^n}|u-c-\mathbf{a}\cdot x|^2\,d\gamma,
\end{split}
\end{equation}
with sharp constants, and where the equality is attained by nonlinear functions. See also \cite{LLR26} for the weighted Gaussian measures, and \cite{CFLL24, DoLL26} where the Gaussian type Poincar\'e inequalities are the main tool to obtain sharp stability of the Heisenberg uncertainty principle and of the Caffarelli--Kohn--Nirenberg inequalities.

The main purpose of this paper is to show that, in the two settings \eqref{PoinBall} and \eqref{PoinGauss}, the stability problem can be solved {\bf completely}, and that we obtain a hierarchy of sharp inequalities. This is due to the following facts. In both cases, the deficit of the Poincar\'e inequality is a quadratic form, the set of optimizers is a linear space spanned by eigenfunctions, and therefore the deficit can be decomposed exactly along the spectrum. Every truncation of this decomposition is a sharp inequality: the first one is the stability estimate, the second one is the stability of the stability estimate, and so on. Moreover, when we sum up all the terms we recover exactly the deficit. We use two norms to measure the distance to the optimizers, the $L^2$-norm and the $L^2$-gradient norm. They lead to two different infinite sums, with two different families of coefficients.

We now state our main results. We first consider the Euclidean ball. Let
\[
0=\mu_0<\mu_1<\mu_2<\mu_3<\cdots
\]
be the distinct eigenvalues of \eqref{Neumann} and let $E_k$ be the eigenspace of $\mu_k$. We denote
\[
\mathcal{M}_m:=E_0\oplus E_1\oplus\cdots\oplus E_m,\qquad m\geq1,
\]
so that $\mathcal{M}_1$ is exactly the set of optimizers of \eqref{PoinBall}. It is convenient to introduce the deficit
\begin{equation}\label{deficitBall}
\delta_{B_R}(u):=\int_{B_R}|\nabla u|^2\,dx-\mu_1\int_{B_R}|u-u_{B_R}|^2\,dx\geq0.
\end{equation}
Our first result shows that the deficit \eqref{deficitBall} controls both the $L^2$-distance and the $L^2$-gradient distance to the set of optimizers, with explicit sharp constants given by Bessel roots.

\begin{theorem}\label{TstabBall}
Let $N\geq2$ and let $\beta_N$ be the first positive root of the equation
\begin{equation}\label{betaN}
2J_{N/2+1}(\beta)=\beta J_{N/2+2}(\beta).
\end{equation}
Then $\mu_2=\beta_N^2/R^2$, and for all $u\in W^{1,2}(B_R)$ we have
\begin{equation}\label{L2stabBall}
\int_{B_R}|\nabla u|^2\,dx-\frac{\alpha_N^2}{R^2}\int_{B_R}|u-u_{B_R}|^2\,dx\geq \frac{\beta_N^2-\alpha_N^2}{R^2}\inf_{c\in\mathbb{R},\ \phi\in E_1}\int_{B_R}|u-c-\phi|^2\,dx
\end{equation}
and
\begin{equation}\label{GradstabBall}
\int_{B_R}|\nabla u|^2\,dx-\frac{\alpha_N^2}{R^2}\int_{B_R}|u-u_{B_R}|^2\,dx\geq \left(1-\frac{\alpha_N^2}{\beta_N^2}\right)\inf_{\phi\in E_1}\int_{B_R}|\nabla (u-\phi)|^2\,dx.
\end{equation}
The constants $\frac{\beta_N^2-\alpha_N^2}{R^2}$ and $1-\frac{\alpha_N^2}{\beta_N^2}$ are sharp and can be attained. Moreover, the equality holds in \eqref{L2stabBall}, or in \eqref{GradstabBall}, if and only if $u\in\mathcal{M}_2$, that is,
\[
u(x)=c+|x|^{1-N/2}J_{N/2}\left(\frac{\alpha_N|x|}{R}\right)\frac{x\cdot \mathbf{a}}{|x|}+|x|^{1-N/2}J_{N/2+1}\left(\frac{\beta_N|x|}{R}\right)Y_2\left(\frac{x}{|x|}\right),
\]
where $c\in\mathbb{R}$, $\mathbf{a}\in\mathbb{R}^N$ and $Y_2$ is a spherical harmonic of degree two.
\end{theorem}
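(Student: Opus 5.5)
The argument splits into two parts: identifying $\mu_2$ through separation of variables, and then a spectral-decomposition argument for both inequalities.

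\textbf{Step 1: identifying $\mu_2$.} In polar coordinates the Neumann eigenfunctions on $B_R$ are
\[
|x|^{1-N/2}J_{N/2-1+\ell}(\sqrt{\lambda}\,|x|)\,Y_\ell(x/|x|),
\]
where $Y_\ell$ is a spherical harmonic of degree $\ell$. The Neumann condition at $|x|=R$ reads $\frac{d}{d\rho}\big(\rho^{1-N/2}J_{\nu}(\rho)\big)=0$ with $\nu=N/2-1+\ell$. By the recurrence \eqref{recurrence} this is equivalent to
\[
(\ell-\tfrac{N}{2}+1)J_\nu(\rho)=\rho J_{\nu+1}(\rho).
\]
For $\ell=2$ this becomes $(3-N/2)J_{N/2+1}=\rho J_{N/2+2}$. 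Using $\rho J_{\nu+2}=2(\nu+1)J_{\nu+1}-\rho J_\nu$, I would rewrite this in the form \eqref{betaN}, possibly after an index shift; I will check the algebra rather than assume it.

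Next I compare the candidates for the second level: the first $\ell=2$ root $\beta_N$, the first nontrivial radial root $j_{N/2,1}$ (the $\ell=0$ condition is $J_{N/2}(\rho)=0$), the second $\ell=1$ root, and the first roots for $\ell\ge3$. The $\ell\geq3$ and higher-$\ell=1$ roots are excluded by monotonicity in $\ell$ of the $k$-th Neumann root, which follows from the min-max principle on each spherical-harmonic sector, and by interlacing. So the real issue is the comparison $\beta_N<j_{N/2,1}$ for all $N\ge2$. I would try interlacing: the first positive zero of $\frac{d}{d\rho}(\rho^{-\nu+1}J_{\nu+1})$ lies below $j_{\nu+1,1}$, while $j_{N/2,1}$ sits between $j_{N/2-1,1}$ and $j_{N/2+1,1}$. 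Failing that, I would test the Rayleigh quotient on a radial trial function to compare levels. \textbf{I expect this comparison to be the main obstacle}, possibly needing explicit bounds on Bessel zeros valid uniformly in $N$, with the small dimensions checked numerically.

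\textbf{Step 2: the two inequalities.} Expand $u=\sum_k u_k$ with $u_k\in E_k$, orthogonal in both $L^2$ and $\dot W^{1,2}$ (Neumann eigenfunctions are orthogonal in both, and $\|\nabla u_k\|^2=\mu_k\|u_k\|^2$). Then
\[
\delta_{B_R}(u)=\sum_{k\ge2}(\mu_k-\mu_1)\|u_k\|^2 .
\]
The $L^2$-infimum over $c,\phi$ equals $\sum_{k\ge2}\|u_k\|^2$, and since $\mu_k-\mu_1\ge\mu_2-\mu_1$ for $k\ge2$, this gives \eqref{L2stabBall}.

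For \eqref{GradstabBall}, the gradient infimum over $\phi\in E_1$ equals $\sum_{k\ge2}\mu_k\|u_k\|^2$. The constant $E_0$ component is invisible to the gradient, so it plays no role. Because $1-\mu_1/\mu_k$ is increasing in $k$, we have
\[
(\mu_k-\mu_1)\|u_k\|^2=\Big(1-\frac{\mu_1}{\mu_k}\Big)\mu_k\|u_k\|^2\ge\Big(1-\frac{\mu_1}{\mu_2}\Big)\mu_k\|u_k\|^2 ,
\]
which yields \eqref{GradstabBall} after summing over $k\ge2$.

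\textbf{Equality cases.} Equality forces $u_k=0$ for every $k\ge3$, because the inequalities $\mu_k>\mu_2$ are strict. Conversely, every $u\in\mathcal M_2$ gives equality. This yields the stated form of $u$ via Step 1, and attainment of both constants by any $u$ with $u_2\neq0$ shows they are sharp.
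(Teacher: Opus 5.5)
Your Step 2 and your treatment of equality and sharpness are correct, and they coincide with the paper's general spectral argument (Theorem \ref{Tstab}). The gap is in Step 1, which carries the real content of the statement: $\mu_2=\beta_N^2/R^2$, with $E_2$ consisting exactly of the degree-two modes.

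\textbf{The Neumann condition.} There is an algebra slip. From $\rho J_\nu'(\rho)=(\frac N2-1)J_\nu(\rho)$ and $\rho J_\nu'=\nu J_\nu-\rho J_{\nu+1}$ you get $(\nu-\frac N2+1)J_\nu=\rho J_{\nu+1}$. Since $\nu-\frac N2+1=\ell$, the condition is $\ell J_\nu(\rho)=\rho J_{\nu+1}(\rho)$, and for $\ell=2$ this is \eqref{betaN} verbatim. Your equation $(3-\frac N2)J_{N/2+1}=\rho J_{N/2+2}$ differs from \eqref{betaN} by $(1-\frac N2)J_{N/2+1}=0$. For $N\neq2$ it therefore shares no positive root with \eqref{betaN}, and no recurrence or index shift will convert one into the other.

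\textbf{The key comparison.} More seriously, the decisive inequality $\beta_N<j_{N/2,1}$, i.e.\ $\lambda_{2,1}<\lambda_{0,2}$, is only flagged as the obstacle, and the routes you suggest do not reach it. The interlacing facts you quote give $\beta_N<j_{N/2+1,1}$ and $j_{N/2,1}<j_{N/2+1,1}$, which say nothing about the relative order of $\beta_N$ and $j_{N/2,1}$. A radial trial function bounds $\lambda_{0,2}$ from above, which is the wrong direction. Numerics in low dimensions would still need a uniform argument for large $N$.

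\textbf{The paper's argument.} The paper writes the $\ell=2$ condition as $F_\nu(\rho)=2$, with $F_\nu(\rho)=\rho J_{\nu+1}(\rho)/J_\nu(\rho)$ and $\nu=\frac N2+1$. This is legitimate because $J_\nu$ and $J_{\nu+1}$ have no common positive zero.
\begin{itemize}
\item By the Mittag-Leffler expansion \eqref{ML}, $F_\nu$ increases from $0$ to $+\infty$ on $(0,j_{\nu,1})$.
\item The recurrence gives $F_\nu(\rho)=2\nu-\rho J_{\nu-1}(\rho)/J_\nu(\rho)$, so $F_\nu(j_{\nu-1,1})=2\nu=N+2>2$.
\item Since $j_{N/2,1}=j_{\nu-1,1}<j_{\nu,1}$, the first root satisfies $\beta_N<j_{N/2,1}$.
\end{itemize}

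\textbf{A repair along your own line.} Your Rayleigh-quotient idea also works if you apply it to the degree-two sector instead. Testing with $r^2Y_2$ gives $\beta_N^2\le 2N+8$. The Rayleigh sum $\sum_k j_{\nu,k}^{-4}=\frac{1}{16(\nu+1)^2(\nu+2)}$ with $\nu=\frac N2$ gives $j_{N/2,1}^4>2(N+2)^2(N+4)$. This exceeds $4(N+4)^2$ because $(N+2)^2>2(N+4)$ for all $N\geq2$, so $j_{N/2,1}^2>2N+8\ge\beta_N^2$.

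\textbf{Strictness.} Min-max only gives $\lambda_{\ell,1}\le\lambda_{\ell+1,1}$. To conclude that $E_2$ contains only degree-two modes, as the displayed equality case requires, you need $\lambda_{1,1}<\lambda_{2,1}<\lambda_{3,1}\le\lambda_{\ell,1}$ for $\ell\geq3$. The strict inequalities follow in either of two ways:
\begin{itemize}
\item evaluate the strictly larger potential $\ell(\ell+N-2)/r^2$ on the minimizer of the higher sector;
\item compare the expansions \eqref{ML} of $F_\nu$ and $F_{\nu+1}$, as in Lemma \ref{Lorder}(v).
\end{itemize}
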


We note that the $L^2$-stability constant in \eqref{L2stabBall} has the dimension $R^{-2}$, while the gradient stability constant in \eqref{GradstabBall} does not depend on $R$. We also note that $1-\alpha_N^2/\beta_N^2$ tends to $\frac12$ as $N\to\infty$; this follows from the two-sided bounds of Proposition \ref{Pbounds}, which give $N-1\leq\alpha_N^2\leq N+2$ and $2N\leq\beta_N^2\leq2N+8$. For instance, this constant equals $0.63659\ldots$ when $N=2$ and $0.61207\ldots$ when $N=3$, see Corollary \ref{Cor23}.

Our second result is the stability of the stability inequalities \eqref{L2stabBall} and \eqref{GradstabBall}. Here the third eigenvalue $\mu_3$ appears, and its nature is different in low dimensions and in high dimensions.

\begin{theorem}\label{TstabstabBall}
Let $N\geq2$. Let $\rho_{3,1}$ be the first positive root of the equation $3J_{N/2+2}(\rho)=\rho J_{N/2+3}(\rho)$ and let $j_{N/2,1}$ be the first positive zero of $J_{N/2}$. Then
\[
\mu_3=\frac{1}{R^2}\min\left\{j_{N/2,1}^2,\ \rho_{3,1}^2\right\},
\]
and the minimum is $\rho_{3,1}^2$ when $N\geq4$, and $j_{N/2,1}^2$ when $N=2,3$. Moreover, for all $u\in W^{1,2}(B_R)$ we have
\begin{equation}\label{L2L2Ball}
\delta_{B_R}(u)-\frac{\beta_N^2-\alpha_N^2}{R^2}\inf_{c\in\mathbb{R},\ \phi\in E_1}\int_{B_R}|u-c-\phi|^2\,dx\geq \left(\mu_3-\frac{\beta_N^2}{R^2}\right)\inf_{\Phi\in\mathcal{M}_2}\int_{B_R}|u-\Phi|^2\,dx
\end{equation}
and
\begin{equation}\label{GradGradBall}
\begin{split}
&\delta_{B_R}(u)-\left(1-\frac{\alpha_N^2}{\beta_N^2}\right)\inf_{\phi\in E_1}\int_{B_R}|\nabla (u-\phi)|^2\,dx\\
&\qquad\geq \alpha_N^2\left(\frac{1}{\beta_N^2}-\frac{1}{R^2\mu_3}\right)\inf_{\Phi\in\mathcal{M}_2}\int_{B_R}|\nabla (u-\Phi)|^2\,dx.
\end{split}
\end{equation}
The constants are sharp and can be attained, and the equality holds in \eqref{L2L2Ball}, or in \eqref{GradGradBall}, if and only if $u\in\mathcal{M}_3$.
\end{theorem}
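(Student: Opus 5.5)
The proof splits into two parts: identifying $\mu_3$, and the spectral inequalities, which are the easier part. I would treat the inequalities first.

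\textbf{The inequalities.} Expand $u\in W^{1,2}(B_R)$ in an orthonormal Neumann eigenbasis, $u=\sum_{k\ge0}u_k$ with $u_k\in E_k$, so that
\[
\int_{B_R}|u|^2\,dx=\sum_k\|u_k\|^2,\qquad \int_{B_R}|\nabla u|^2\,dx=\sum_k\mu_k\|u_k\|^2 .
\]
Since the spaces $E_k$ are orthogonal both in $L^2$ and in the Dirichlet form, the infima over $\mathcal M_m$ are attained at the orthogonal projections. This gives
\[
\delta_{B_R}(u)=\sum_{k\ge2}(\mu_k-\mu_1)\|u_k\|^2,\qquad \inf_{c,\phi}\|u-c-\phi\|^2=\sum_{k\ge2}\|u_k\|^2,
\]
\[
\inf_{\Phi\in\mathcal M_2}\|u-\Phi\|^2=\sum_{k\ge3}\|u_k\|^2,\qquad \inf_{\phi\in E_1}\|\nabla(u-\phi)\|^2=\sum_{k\ne1}\mu_k\|u_k\|^2 .
\]
The left side of \eqref{L2L2Ball} is then $\sum_{k\ge3}(\mu_k-\mu_2)\|u_k\|^2\ge(\mu_3-\mu_2)\sum_{k\ge3}\|u_k\|^2$, using $\mu_2=\beta_N^2/R^2$ from Theorem \ref{TstabBall}.

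For \eqref{GradGradBall}, the $k=0$ term vanishes because $\mu_0=0$, so the left side is
\[
\sum_{k\ge2}\Big(\mu_k-\mu_1-\big(1-\tfrac{\mu_1}{\mu_2}\big)\mu_k\Big)\|u_k\|^2=\sum_{k\ge3}\mu_1\mu_k\Big(\tfrac1{\mu_2}-\tfrac1{\mu_k}\Big)\|u_k\|^2 .
\]
This is at least $\mu_1\big(\tfrac1{\mu_2}-\tfrac1{\mu_3}\big)\sum_{k\ge3}\mu_k\|u_k\|^2$. Since $\mu_1/\mu_2=\alpha_N^2/\beta_N^2$, the constant equals $\alpha_N^2\big(\beta_N^{-2}-(R^2\mu_3)^{-1}\big)$, as claimed.

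Equality holds exactly when $u_k=0$ for all $k\ge4$, because the gaps are strictly increasing ($\mu_k>\mu_3$ for $k\ge4$ and $1/\mu_k$ is decreasing). So the equality set is $\mathcal M_3$, which is nonempty beyond $\mathcal M_2$; this gives sharpness and attainment.

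\textbf{Identifying $\mu_3$.} Separation of variables gives Neumann eigenvalues $\rho^2/R^2$, where $\rho$ is a positive zero of $\frac{d}{dr}\big(r^{1-N/2}J_{\ell+N/2-1}(r)\big)$ for degree-$\ell$ harmonics. By the recurrence \eqref{recurrence} this derivative equals
\[
r^{-N/2}\big(\ell J_{\nu}(r)-rJ_{\nu+1}(r)\big),\qquad \nu=\ell+N/2-1 .
\]
For $\ell=0$ it is $-r^{1-N/2}J_{N/2}(r)$, so the first radial root is $j_{N/2,1}$. For $\ell=3$ the root is $\rho_{3,1}$. Denote by $p_{\ell,s}$ the $s$-th root for degree $\ell$. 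I would use the classical monotonicity facts: $p_{\ell,1}$ is increasing in $\ell$, since $p_{\ell,1}$ lies between $j'_{\nu,1}$-type interlacing values, and $p_{\ell,2}>j_{\nu,1}>p_{\ell,1}$, with $p_{\ell,2}$ increasing in $\ell$. Granting these, and that the roots for $\ell=1,2$ give $\mu_1,\mu_2$ (Theorem \ref{TstabBall}), the candidates for $\mu_3$ are only $p_{0,1}=j_{N/2,1}$ and $p_{3,1}$. Indeed $p_{1,2}\ge p_{0,1}$, because second roots exceed $j_{\nu,1}$ and $j_{N/2,1}\le j_{N/2+1,1}\le p_{\ell,2}$ by interlacing with zeros of $J_{\nu+1}$.

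\textbf{Main obstacle: comparing $j_{N/2,1}$ and $\rho_{3,1}$.} For $N=2,3$ the values can be checked numerically with rigorous enclosures:
\begin{itemize}
\item $N=2$: $j_{1,1}\approx3.832<\rho_{3,1}\approx4.201$;
\item $N=3$: $j_{3/2,1}\approx4.493<\rho_{3,1}\approx4.973$.
\end{itemize}
For $N\ge4$ I would try explicit bounds of the same type as Proposition \ref{Pbounds}, namely $\rho_{3,1}^2\le 3N+c$ and $j_{N/2,1}^2\ge$ a quantity like $(N/2+1)(N/2+5)$. The upper bound on $\rho_{3,1}^2$ comes from a Rayleigh quotient with the test function $r^3Y_3$, or from a sign change located via the series. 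The lower bound on $j_{N/2,1}^2$ is the standard $j_{\nu,1}^2>(\nu+1)(\nu+5)$ inequality. For large $N$ these make $j_{N/2,1}^2\sim N^2/4$ clearly exceed $\rho_{3,1}^2\sim3N$. The finitely many small $N\ge4$ not covered by the asymptotic bound, including $N=4$ ($j_{2,1}\approx5.136$ vs.\ $\rho_{3,1}\approx5.7$?), require care and likely a direct numerical certification. This case $N=4$ is the step I expect to be delicate.
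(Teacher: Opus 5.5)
Your spectral half is correct and is the paper's general principle: the two inequalities, the remainders, the equality set $\mathcal{M}_3$, and sharpness. Your reduction of $\mu_3$ to $\frac{1}{R^2}\min\{j_{N/2,1}^2,\rho_{3,1}^2\}$ also matches the paper, granting the classical ordering facts. The genuine gap is the comparison of $j_{N/2,1}$ with $\rho_{3,1}$, which is the real content of the first assertion, and your proposal does not establish it.

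\textbf{Your bounds do not cover $4\le N\le 7$.} You use $\rho_{3,1}^2<3(N+6)$ (from $r^3Y_3$) and $j_{N/2,1}^2>(N+2)(N+10)/4$. These close only when $N^2\ge 52$, i.e.\ for $N\ge 8$, so $N=4,\dots,7$ are open. For $N=4$ your route cannot work with any lower bound on $j_{2,1}$, since $j_{2,1}^2\approx 26.37<30=3(N+6)$: the test function $r^3$ is simply too weak there.

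\textbf{Your numerics point the wrong way.} For $N=4$ one has $\rho_{3,1}^2\in(23,24)$, so $\rho_{3,1}\approx 4.81<j_{2,1}\approx 5.136$. Your guess $\rho_{3,1}\approx 5.7$ would contradict the theorem. For $N=3$ the true value is $\rho_{3,1}\approx 4.514$, not $4.973$, so the margin over $j_{3/2,1}\approx 4.4934$ is about $0.02$. Thus $N=3$, not $N=4$, is the nearly degenerate case. Enclosures that are announced but not produced are not a proof.

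\textbf{The missing idea: an exact criterion that eliminates $\rho_{3,1}$.} Let $\nu=N/2+2$ and $F_\nu(\rho)=\rho J_{\nu+1}(\rho)/J_\nu(\rho)$. By the Mittag-Leffler expansion, $F_\nu$ is increasing on $(0,j_{\nu,1})$, and $F_\nu(\rho_{3,1})=3$. Set $\rho_0=j_{N/2,1}=j_{\nu-2,1}<j_{\nu,1}$. Applying the recurrence twice and using $J_{\nu-2}(\rho_0)=0$ gives
\[
F_\nu(\rho_0)=2\nu-\frac{\rho_0^2}{2(\nu-1)}=N+4-\frac{j_{N/2,1}^2}{N+2}.
\]
Hence $\rho_{3,1}<j_{N/2,1}$ if and only if $j_{N/2,1}^2<(N+1)(N+2)$. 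Only a one-sided bound on a single Bessel zero is now needed.
\begin{itemize}
\item For $N\ge 4$: the Dirichlet Bessel Rayleigh quotient with test function $t^\nu(1-t^2)$ gives $j_{\nu,1}^2<2(\nu+1)(\nu+3)$. Hence $j_{N/2,1}^2<\frac{(N+2)(N+6)}{2}\le (N+1)(N+2)$ for all $N\ge 4$ at once.
\item For $N=2$: the Rayleigh sum $\sum_k j_{\nu,k}^{-4}=\frac{1}{16(\nu+1)^2(\nu+2)}$ gives $j_{1,1}^2\ge 8\sqrt3>12$.
\item For $N=3$: $j_{3/2,1}$ is the first positive root of $\tan\rho=\rho$, and $\tan\sqrt{20}<\sqrt{20}$ gives $j_{3/2,1}^2>20$.
\end{itemize}
For $N\ge4$ this route needs an \emph{upper} bound on $j_{N/2,1}$, the opposite direction from the lower bound your approach relies on.
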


We will see in Section \ref{sec:ball} that when $N\geq4$ the new directions in $\mathcal{M}_3$ are the first degree three Neumann modes, while when $N=2,3$ they are given by the first nonconstant radial Neumann eigenfunction $|x|^{1-N/2}J_{N/2-1}(j_{N/2,1}|x|/R)$. We note that in dimension $N=3$ the two values are very close to each other: $j_{3/2,1}^2=20.19\ldots$ and $\rho_{3,1}^2=20.37\ldots$.

Our third result shows that the above process can be continued indefinitely, and that the deficit is recovered exactly.

\begin{theorem}\label{TseriesBall}
Let $N\geq2$. For all $u\in W^{1,2}(B_R)$, we have
\begin{equation}\label{L2seriesBall}
\int_{B_R}|\nabla u|^2\,dx-\mu_1\int_{B_R}|u-u_{B_R}|^2\,dx=\sum_{j=2}^{\infty}\left(\mu_j-\mu_{j-1}\right)\inf_{\Phi\in\mathcal{M}_{j-1}}\int_{B_R}|u-\Phi|^2\,dx
\end{equation}
and
\begin{equation}\label{GradseriesBall}
\int_{B_R}|\nabla u|^2dx-\mu_1\int_{B_R}|u-u_{B_R}|^2dx=\mu_1\sum_{j=2}^{\infty}\left(\frac{1}{\mu_{j-1}}-\frac{1}{\mu_j}\right)\inf_{\Phi\in\mathcal{M}_{j-1}}\int_{B_R}|\nabla (u-\Phi)|^2dx,
\end{equation}
where both series are convergent. Moreover, for every $m\geq2$, the partial sums satisfy the sharp inequalities
\[
\delta_{B_R}(u)\geq\sum_{j=2}^{m}\left(\mu_j-\mu_{j-1}\right)\inf_{\Phi\in\mathcal{M}_{j-1}}\int_{B_R}|u-\Phi|^2\,dx
\]
and
\[
\delta_{B_R}(u)\geq\mu_1\sum_{j=2}^{m}\left(\frac{1}{\mu_{j-1}}-\frac{1}{\mu_j}\right)\inf_{\Phi\in\mathcal{M}_{j-1}}\int_{B_R}|\nabla (u-\Phi)|^2\,dx,
\]
in which the last coefficient cannot be improved and the equality holds if and only if $u\in\mathcal{M}_m$.
\end{theorem}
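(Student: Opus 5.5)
The proof rests on the spectral decomposition of the Neumann Laplacian on $B_R$. Since $B_R$ is a bounded Lipschitz domain, the embedding $W^{1,2}(B_R)\hookrightarrow L^2(B_R)$ is compact. Hence $L^2(B_R)=\bigoplus_{k\geq0}E_k$ (orthogonal sum), each $E_k$ is finite dimensional, and $\mu_k\to\infty$.

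For $u\in W^{1,2}(B_R)$ we write $u=\sum_k u_k$ with $u_k\in E_k$. Then $\int|\nabla u|^2=\sum_k\mu_k\|u_k\|_2^2$, because the eigenfunctions are also orthogonal in the Dirichlet form: for $\phi\in E_k$, $\psi\in E_l$ we have $\int\nabla\phi\cdot\nabla\psi=\mu_k\int\phi\psi$. Since $u_0=u_{B_R}$, we get
\[
\delta_{B_R}(u)=\sum_{k\geq2}(\mu_k-\mu_1)\|u_k\|_2^2 .
\]
The two distances are also explicit. $\mathcal{M}_{j-1}$ is a closed subspace that is orthogonal both in $L^2$ and in the Dirichlet form, so the infima are attained at the truncation $\sum_{k<j}u_k$:
\[
\inf_{\Phi\in\mathcal{M}_{j-1}}\|u-\Phi\|_2^2=\sum_{k\geq j}\|u_k\|_2^2,\qquad \inf_{\Phi\in\mathcal{M}_{j-1}}\|\nabla(u-\Phi)\|_2^2=\sum_{k\geq j}\mu_k\|u_k\|_2^2 .
\]
For the gradient identity, note that $\nabla(u-\Phi)$ splits orthogonally into $\nabla(\sum_{k<j}u_k-\Phi)$ and $\nabla(\sum_{k\ge j}u_k)$.

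Both identities now follow by exchanging the order of summation in series of nonnegative terms (Tonelli), so convergence is automatic. For \eqref{L2seriesBall},
\[
\sum_{j\geq2}(\mu_j-\mu_{j-1})\sum_{k\geq j}\|u_k\|_2^2=\sum_{k\geq2}\|u_k\|_2^2\sum_{j=2}^{k}(\mu_j-\mu_{j-1})=\sum_{k\geq2}(\mu_k-\mu_1)\|u_k\|_2^2 .
\]
For \eqref{GradseriesBall},
\[
\mu_1\sum_{j\geq2}\Big(\frac1{\mu_{j-1}}-\frac1{\mu_j}\Big)\sum_{k\geq j}\mu_k\|u_k\|_2^2=\sum_{k\ge2}\mu_1\mu_k\Big(\frac1{\mu_1}-\frac1{\mu_k}\Big)\|u_k\|_2^2,
\]
which again equals $\sum_{k\geq2}(\mu_k-\mu_1)\|u_k\|_2^2$.

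For the partial sums, the same telescoping with $j\le m$ gives the coefficient of $\|u_k\|_2^2$ on the right-hand side. It is $\mu_{\min(k,m)}-\mu_1$ in the $L^2$ case, and $\mu_k-\mu_1\mu_k/\mu_{\min(k,m)}$ in the gradient case. Each of these is $\le\mu_k-\mu_1$, with equality iff $k\le m$; strict inequality for $k>m$ uses $\mu_k>\mu_m$. Hence
\[
\delta_{B_R}(u)-\text{(partial sum)}=\sum_{k>m}c_k\|u_k\|_2^2,\qquad c_k>0,
\]
with $c_k=\mu_k-\mu_m$ in the $L^2$ case and $c_k=\mu_1\mu_k(\mu_m^{-1}-\mu_k^{-1})$ in the gradient case. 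This yields the inequalities, with equality iff $u_k=0$ for all $k>m$, that is, iff $u\in\mathcal{M}_m$.

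It remains to show that the last coefficient cannot be improved. The idea is to test with $u\in E_{m+1}$, which is where the care lies. For such $u$ all the infima with $j\le m+1$ equal $\|u\|_2^2$ (respectively $\mu_{m+1}\|u\|_2^2$). Suppose the coefficient $\mu_m-\mu_{m-1}$ is replaced by $\mu_m-\mu_{m-1}+\varepsilon$. The right-hand side then becomes $(\mu_m-\mu_1+\varepsilon)\|u\|_2^2$, while the left-hand side is $(\mu_{m+1}-\mu_1)\|u\|_2^2$. This does \emph{not} contradict the inequality unless $\varepsilon>\mu_{m+1}-\mu_m$, so testing on $E_{m+1}$ is not enough and a mixed test is needed.

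The right test is $u=v+w$ with $v\in E_m$ and $w\in E_{m+1}$, letting $\|w\|_2/\|v\|_2\to0$. On the left, the deficit gives
\[
(\mu_m-\mu_1)\|v\|_2^2+O(\|w\|_2^2).
\]
On the right, the $j=m$ term contains $\|v\|_2^2$ with coefficient $\mu_m-\mu_{m-1}$, and the terms $j<m$ contribute $(\mu_{m-1}-\mu_1)\|v\|_2^2$; together these give exactly $(\mu_m-\mu_1)\|v\|_2^2$. Any increase of the last coefficient by $\varepsilon>0$ adds $\varepsilon\|v\|_2^2$ to the right-hand side, which cannot be absorbed. The argument is even simpler with $u\in E_m$ alone, where equality holds, so any enlargement fails; the gradient case is identical. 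The "attained" claim follows from the equality characterization.
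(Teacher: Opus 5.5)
Your proof is correct and follows the paper's route. Like the paper, you use the spectral decomposition of the Neumann Laplacian, the explicit spectral formulas for both distances, the telescoping of $\mu_k-\mu_1$ with an exchange of summation, and a sharpness test by an element of $E_m$ (the paper uses $\Phi+\varepsilon\psi$ with $\Phi\in\mathcal{M}_{m-1}$ and $\psi\in E_m\setminus\{0\}$).

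There are two minor differences. You get convergence directly from Tonelli, whereas the paper bounds the remainder by the tail $\sum_{k\geq m+1}\mu_k\|u_k\|_2^2$. Your detour through $E_{m+1}$ and the mixed test $v+w$ is unnecessary, since the $E_m$ test alone already settles sharpness.
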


We now consider the Gaussian setting. Here the Neumann Laplacian is replaced by the Ornstein--Uhlenbeck operator $L=\Delta-x\cdot\nabla$; the eigenvalues of $-L$ are $0,1,2,\ldots$ and its eigenfunctions are the Hermite polynomials. We denote by $\mathcal{P}_m$ the space of polynomials of degree at most $m$ on $\mathbb{R}^n$, so that $\mathcal{P}_1$ is the set of optimizers of \eqref{PoinGauss}, and we denote by $\Pi_m u$ the orthogonal projection of $u$ onto $\mathcal{P}_m$ in $L^2(\gamma)$. Set
\[
\delta_\gamma(u):=\int_{\mathbb{R}^n}|\nabla u|^2\,d\gamma-\inf_{c\in\mathbb{R}}\int_{\mathbb{R}^n}|u-c|^2\,d\gamma\geq0.
\]

\begin{theorem}\label{TGauss}
Let $n\geq1$. For all $u\in W^{1,2}(\mathbb{R}^n,\gamma)$, we have the sharp stability estimates
\begin{equation}\label{L2Gauss}
\delta_\gamma(u)\geq \inf_{c\in\mathbb{R},\ \mathbf{a}\in\mathbb{R}^n}\int_{\mathbb{R}^n}|u-c-\mathbf{a}\cdot x|^2\,d\gamma,\qquad
\delta_\gamma(u)\geq \frac12\inf_{\mathbf{d}\in\mathbb{R}^n}\int_{\mathbb{R}^n}|\nabla u-\mathbf{d}|^2\,d\gamma,
\end{equation}
with equality if and only if $u\in\mathcal{P}_2$; the sharp stability of the stability estimates
\begin{align}
\delta_\gamma(u)-\inf_{c\in\mathbb{R},\ \mathbf{a}\in\mathbb{R}^n}\int_{\mathbb{R}^n}|u-c-\mathbf{a}\cdot x|^2\,d\gamma&\geq \inf_{P\in\mathcal{P}_2}\int_{\mathbb{R}^n}|u-P|^2\,d\gamma,\label{L2L2Gauss}\\
\delta_\gamma(u)-\frac12\inf_{\mathbf{d}\in\mathbb{R}^n}\int_{\mathbb{R}^n}|\nabla u-\mathbf{d}|^2\,d\gamma&\geq \frac16\inf_{P\in\mathcal{P}_2}\int_{\mathbb{R}^n}|\nabla (u-P)|^2\,d\gamma,\label{GradGradGauss}
\end{align}
with equality if and only if $u\in\mathcal{P}_3$; and the two exact identities
\begin{equation}\label{seriesGauss}
\begin{split}
\int_{\mathbb{R}^n}|\nabla u|^2\,d\gamma-\inf_{c\in\mathbb{R}}\int_{\mathbb{R}^n}|u-c|^2\,d\gamma
&=\sum_{j=1}^{\infty}\inf_{P\in\mathcal{P}_j}\int_{\mathbb{R}^n}|u-P|^2\,d\gamma\\
&=\sum_{j=1}^{\infty}\frac{1}{j(j+1)}\inf_{P\in\mathcal{P}_j}\int_{\mathbb{R}^n}|\nabla (u-P)|^2\,d\gamma.
\end{split}
\end{equation}
All the constants above are optimal.
\end{theorem}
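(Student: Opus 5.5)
We expand $u\in W^{1,2}(\mathbb{R}^n,\gamma)$ in the Hermite basis, $u=\sum_{k\geq0}u_k$, where $u_k$ is the component of $u$ in the $k$-th Wiener chaos $H_k$, the eigenspace of $-L$ for the eigenvalue $k$. Write $a_k:=\|u_k\|_{L^2(\gamma)}^2$. By orthogonality and the integration by parts formula $\int|\nabla v|^2\,d\gamma=\int v(-Lv)\,d\gamma$, we have
\[
\int|\nabla u|^2\,d\gamma=\sum_{k\geq1}k\,a_k,\qquad \inf_c\int|u-c|^2\,d\gamma=\sum_{k\geq1}a_k,\qquad \delta_\gamma(u)=\sum_{k\geq2}(k-1)a_k .
\]
Since $\mathcal{P}_j=H_0\oplus\cdots\oplus H_j$, the infimum over $P\in\mathcal{P}_j$ is attained at $\Pi_ju$. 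The $L^2$ distance is therefore $D_j:=\sum_{k>j}a_k$. The gradient distance is $G_j:=\int|\nabla(u-\Pi_ju)|^2\,d\gamma=\sum_{k>j}k\,a_k$. To justify the gradient case, note that $u-P=(u-\Pi_ju)+(\Pi_ju-P)$ and that the Dirichlet form is orthogonal across chaoses, so the cross term vanishes. We also need $\inf_{\mathbf d}\int|\nabla u-\mathbf d|^2\,d\gamma=G_1$. This holds because a constant vector $\mathbf d$ is exactly $\nabla(\mathbf d\cdot x)$ with $\mathbf d\cdot x\in H_1$, and $\nabla(c)=0$.

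All claims then reduce to coefficient comparisons in the $a_k$.

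\textbf{Identities \eqref{seriesGauss}.} Interchanging the order of summation gives
\[
\sum_{j\geq1}D_j=\sum_{k\geq2}(k-1)a_k=\delta_\gamma(u),
\]
and, using the telescoping sum $\sum_{j=1}^{k-1}\frac{1}{j(j+1)}=1-\frac1k$,
\[
\sum_{j\geq1}\frac{G_j}{j(j+1)}=\sum_{k\geq2}k\,a_k\Bigl(1-\frac1k\Bigr)=\sum_{k\geq2}(k-1)a_k=\delta_\gamma(u).
\]
Tonelli's theorem justifies the interchanges, since all terms are nonnegative. Convergence is automatic because $\sum_k k\,a_k<\infty$.

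\textbf{Inequalities \eqref{L2Gauss}.} We have $\delta_\gamma(u)-D_1=\sum_{k\geq3}(k-2)a_k\geq0$, with equality iff $a_k=0$ for all $k\geq3$, i.e.\ iff $u\in\mathcal{P}_2$. Similarly, $\delta_\gamma(u)-\frac12G_1=\sum_{k\geq2}\bigl(\frac k2-1\bigr)a_k\geq0$, with the same equality case.

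\textbf{Inequalities \eqref{L2L2Gauss} and \eqref{GradGradGauss}.} First, $\delta_\gamma(u)-D_1-D_2=\sum_{k\geq4}(k-3)a_k\geq0$. Second,
\[
\delta_\gamma(u)-\tfrac12G_1-\tfrac16G_2=\sum_{k\geq3}k\,a_k\Bigl(1-\tfrac1k-\tfrac12-\tfrac16\Bigr)=\sum_{k\geq3}\Bigl(\tfrac{k}{3}-1\Bigr)a_k\geq0 .
\]
In both cases equality holds iff $u\in\mathcal{P}_3$.

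\textbf{Optimality.} Each constant is tested on a single chaos with the first index not yet forced to vanish. Taking $u\in H_2$ in \eqref{L2Gauss} gives $\delta_\gamma=a_2=D_1$ and $G_1=2a_2$, so neither constant can be increased. Taking $u\in H_3$ in \eqref{L2L2Gauss} and \eqref{GradGradGauss} gives $\delta_\gamma-D_1=a_3=D_2$ and $\delta_\gamma-\frac12G_1=\frac12a_3=\frac16G_2$, since $G_2=3a_3$; this shows $\frac16$ is sharp. For the series, the coefficient of the $j$-th term is determined by testing on $u\in H_{j+1}$, inductively in $j$.

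The only genuinely delicate point is the functional-analytic setup: the Hermite decomposition must be complete in $W^{1,2}(\gamma)$ and the Dirichlet form must be diagonal in it. We will cite this from \cite{BGL14}. It also justifies that the infimum over $\mathcal{P}_j$ in the gradient seminorm is achieved by the $L^2(\gamma)$ projection $\Pi_j u$.
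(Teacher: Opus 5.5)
Your proposal is correct and follows the paper's argument. The paper also diagonalizes the deficit and both distances in the Wiener chaos decomposition, and then reads off the inequalities, equality cases and optimal constants by comparing coefficients of $\|u_k\|_2^2$. The only difference is that the paper gets there by specializing the abstract Theorems~\ref{Tstab}, \ref{Tstabstab} and \ref{Tseries} to $\mu_k=k$, whereas you do the coefficient comparisons directly. The paper also carries out that direct computation itself for the two series identities.
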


The estimates \eqref{L2Gauss} are exactly the stability results in \cite{LLR25}; we recover them here as the first level of the hierarchy. The remaining estimates in Theorem \ref{TGauss} appear to be new. In particular, by Theorem \ref{Tseries} below, every truncation of the two identities in \eqref{seriesGauss} is a sharp inequality, so that the Gaussian Poincar\'e inequality has an infinite family of sharp higher order stability estimates, whose optimizers are the polynomials of increasing degrees. We remark that in the first identity in \eqref{seriesGauss} all the coefficients are equal to $1$. This is a special property of the Gaussian measure. Indeed, we prove in Proposition \ref{Parith} that these coefficients are all equal to the same constant if and only if the positive spectrum is an arithmetic progression. On the Euclidean ball this is not true when $N=2$ or $N=3$, see Corollary \ref{Cor23}.

The proofs of the above theorems are based on a general principle, which is of independent interest. We now describe it. Suppose that $A$ is a nonnegative self-adjoint operator on $L^2(\mu)$ with discrete spectrum $0=\mu_0<\mu_1<\mu_2<\cdots$, with finite dimensional eigenspaces $E_k$, with $E_0$ the constants, and with the associated energy $\mathcal{E}(u)=\langle Au,u\rangle$. Then the deficit $\mathcal{E}(u)-\mu_1\operatorname{Var}_\mu(u)$ is equal to $\sum_{k\geq2}(\mu_k-\mu_1)\|u_k\|_2^2$, where $u_k$ is the component of $u$ in $E_k$. All the stability inequalities of this paper, in both settings, are rearrangements of this identity. The identity also shows that the $L^2$-stability and the gradient stability are equivalent, with an explicit relation between the constants, a fact observed in a slightly different form in \cite{DLLN26}; see Theorem \ref{Tequiv} below. We present this general principle in Section \ref{sec:general}, where Theorems \ref{TstabBall}--\ref{TGauss} are proved, up to the identification of the spectrum in each of the two settings. In Section \ref{sec:ball} we identify the first three Neumann levels of the ball. This is not immediate. Indeed, the Neumann eigenvalues of the ball are given by the roots of infinitely many transcendental equations, one for each degree of spherical harmonics, and the ordering of these roots is not obvious. In Section \ref{sec:gauss} we treat the Gaussian case and prove Proposition \ref{Parith}.

\section{The general principle}\label{sec:general}

In this section, $\mu$ is a finite positive measure on a set $X$, so that the constants belong to $L^2(\mu)$, and $A\geq0$ is a self-adjoint operator on $L^2(\mu)$ with compact resolvent. We denote by
\[
0=\mu_0<\mu_1<\mu_2<\mu_3<\cdots,\qquad \mu_k\to\infty,
\]
the distinct eigenvalues of $A$, by $E_k$ the eigenspace of $\mu_k$, which is of finite dimension, and we assume that $E_0$ is the space of constant functions. Let $\mathcal{E}(u)=\langle Au,u\rangle$ be the associated quadratic form, defined on the form domain $\mathcal{D}$, and let $\bar u$ be the projection of $u$ onto $E_0$, so that $\operatorname{Var}_\mu(u)=\|u-\bar u\|_{L^2(\mu)}^2$. For $u\in\mathcal{D}$ we write
\begin{equation}\label{decomp}
u-\bar u=\sum_{k\geq1}u_k,\qquad u_k\in E_k.
\end{equation}
Since the eigenspaces are orthogonal to each other in $L^2(\mu)$ and also with respect to the form $\mathcal{E}$, we have
\begin{equation}\label{parseval}
\operatorname{Var}_\mu(u)=\sum_{k\geq1}\|u_k\|_2^2,\qquad \mathcal{E}(u)=\sum_{k\geq1}\mu_k\|u_k\|_2^2.
\end{equation}
Hence the sharp Poincar\'e inequality $\mathcal{E}(u)\geq\mu_1\operatorname{Var}_\mu(u)$ holds, with equality if and only if $u-\bar u\in E_1$, and the deficit
\begin{equation}\label{deficit}
\delta(u):=\mathcal{E}(u)-\mu_1\operatorname{Var}_\mu(u)=\sum_{k\geq2}(\mu_k-\mu_1)\|u_k\|_2^2
\end{equation}
does not contain the $E_1$ component. Let $\mathcal{M}_m:=E_0\oplus E_1\oplus\cdots\oplus E_m$. We define the two distances
\[
d_2(u,\mathcal{M}_m)^2:=\inf_{\Phi\in\mathcal{M}_m}\|u-\Phi\|_2^2,\qquad d_{\mathcal{E}}(u,\mathcal{M}_m)^2:=\inf_{\Phi\in\mathcal{M}_m}\mathcal{E}(u-\Phi).
\]
Since $\mathcal{M}_m$ is a spectral subspace, both infima are attained at $\Phi=\bar u+u_1+\cdots+u_m$, and
\begin{equation}\label{distspectral}
d_2(u,\mathcal{M}_m)^2=\sum_{k\geq m+1}\|u_k\|_2^2,\qquad d_{\mathcal{E}}(u,\mathcal{M}_m)^2=\sum_{k\geq m+1}\mu_k\|u_k\|_2^2.
\end{equation}
In particular, from \eqref{deficit} and \eqref{distspectral},
\begin{equation}\label{link}
d_{\mathcal{E}}(u,\mathcal{M}_1)^2=\delta(u)+\mu_1 d_2(u,\mathcal{M}_1)^2.
\end{equation}

We first state the equivalence of the $L^2$-stability and the gradient stability. This result was proved, in a slightly different form, in \cite{DLLN26}. We include the proof here, because by \eqref{link} it is very short.

\begin{theorem}\label{Tequiv}
Let $\alpha>0$. The following statements are equivalent:
\begin{enumerate}
\item[(1)] For all $u\in\mathcal{D}$, $\delta(u)\geq\alpha\, d_2(u,\mathcal{M}_1)^2$.
\item[(2)] For all $u\in\mathcal{D}$, $\delta(u)\geq\dfrac{\alpha}{\mu_1+\alpha}\, d_{\mathcal{E}}(u,\mathcal{M}_1)^2$.
\end{enumerate}
\end{theorem}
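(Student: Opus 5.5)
The proof rests on the identity \eqref{link}, $d_{\mathcal{E}}(u,\mathcal{M}_1)^2=\delta(u)+\mu_1 d_2(u,\mathcal{M}_1)^2$. Write $\delta=\delta(u)$, $d_2=d_2(u,\mathcal{M}_1)^2$ and $d_{\mathcal{E}}=d_{\mathcal{E}}(u,\mathcal{M}_1)^2$. Both statements are then linear inequalities between the two quantities $\delta$ and $d_2$, and we show that they are the same inequality.

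\emph{(1) implies (2).} Substituting \eqref{link}, statement (2) reads
\[
(\mu_1+\alpha)\,\delta\geq\alpha\,(\delta+\mu_1 d_2).
\]
Multiplying by $\mu_1+\alpha>0$ is harmless. The inequality simplifies to $\mu_1\delta\geq\alpha\mu_1 d_2$. Since $\mu_1>0$, this is exactly $\delta\geq\alpha d_2$, which is statement (1).

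\emph{(2) implies (1).} We reverse the same algebra. From $(\mu_1+\alpha)\delta\geq\alpha d_{\mathcal{E}}=\alpha\delta+\alpha\mu_1 d_2$, subtract $\alpha\delta$ and divide by $\mu_1>0$. This gives $\delta\geq\alpha d_2$.

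The only points to verify are the ingredients of \eqref{link}. First, both infima are attained at the spectral projection $\bar u+u_1$, which gives \eqref{distspectral}. Second, $\mathcal{E}$ vanishes on constants and is orthogonal across eigenspaces, so
\[
d_{\mathcal{E}}=\sum_{k\geq2}\mu_k\|u_k\|_2^2=\sum_{k\geq2}(\mu_k-\mu_1)\|u_k\|_2^2+\mu_1\sum_{k\geq2}\|u_k\|_2^2=\delta+\mu_1 d_2 .
\]
Both facts were established above \eqref{link}. Since these are just algebraic manipulations of a quantity already in hand, I expect no real obstacle.

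The same computation shows that equality cases correspond: $u$ attains equality in (1) if and only if it attains equality in (2). It also shows that sharpness transfers between the two statements, because the map $\alpha\mapsto\alpha/(\mu_1+\alpha)$ is increasing. In particular, the optimal $\alpha=\mu_2-\mu_1$ in (1) yields the optimal constant $1-\mu_1/\mu_2$ in (2).
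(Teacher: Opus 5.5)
Your proof is correct and follows the paper's argument: both rest on the identity \eqref{link} and turn each inequality into the other by elementary algebra. Your version makes explicit that $(\mu_1+\alpha)\delta(u)-\alpha\, d_{\mathcal{E}}(u,\mathcal{M}_1)^2=\mu_1\bigl(\delta(u)-\alpha\, d_2(u,\mathcal{M}_1)^2\bigr)$ holds for each $u$, and this identity also gives the correspondence of equality cases directly.
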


\begin{proof}
Assume (1). Then by \eqref{link},
\[
d_{\mathcal{E}}(u,\mathcal{M}_1)^2=\delta(u)+\mu_1d_2(u,\mathcal{M}_1)^2\leq \delta(u)+\frac{\mu_1}{\alpha}\delta(u)=\frac{\mu_1+\alpha}{\alpha}\delta(u),
\]
which is (2). Conversely, assume (2). Then again by \eqref{link},
\[
\delta(u)\geq\frac{\alpha}{\mu_1+\alpha}\left(\delta(u)+\mu_1d_2(u,\mathcal{M}_1)^2\right),
\]
that is, $\frac{\mu_1}{\mu_1+\alpha}\delta(u)\geq \frac{\alpha\mu_1}{\mu_1+\alpha}d_2(u,\mathcal{M}_1)^2$, which is (1).
\end{proof}

\begin{theorem}[Sharp $L^2$-stability and gradient stability]\label{Tstab}
For all $u\in\mathcal{D}$, we have
\begin{equation}\label{L2stab}
\delta(u)\geq(\mu_2-\mu_1)\,d_2(u,\mathcal{M}_1)^2
\end{equation}
and
\begin{equation}\label{Gradstab}
\delta(u)\geq\left(1-\frac{\mu_1}{\mu_2}\right)d_{\mathcal{E}}(u,\mathcal{M}_1)^2.
\end{equation}
The constants $\mu_2-\mu_1$ and $1-\mu_1/\mu_2$ are sharp and can be attained, and the equality holds in \eqref{L2stab} or in \eqref{Gradstab} if and only if $u\in\mathcal{M}_2$. More precisely,
\begin{align}
\delta(u)-(\mu_2-\mu_1)\,d_2(u,\mathcal{M}_1)^2&=\sum_{k\geq3}(\mu_k-\mu_2)\|u_k\|_2^2,\label{L2rem}\\
\delta(u)-\left(1-\frac{\mu_1}{\mu_2}\right)d_{\mathcal{E}}(u,\mathcal{M}_1)^2&=\frac{\mu_1}{\mu_2}\sum_{k\geq3}(\mu_k-\mu_2)\|u_k\|_2^2.\label{Gradrem}
\end{align}
\end{theorem}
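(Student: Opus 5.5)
The plan is to work entirely with the spectral decomposition \eqref{decomp} and the identities \eqref{parseval}, \eqref{deficit} and \eqref{distspectral}. Everything reduces to comparing weighted sums of the nonnegative numbers $\|u_k\|_2^2$, $k\geq2$. We first prove the two exact remainder identities \eqref{L2rem} and \eqref{Gradrem}. The inequalities, their sharpness and the equality cases then follow by reading off the signs of the coefficients.

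\textbf{The $L^2$ identity.} By \eqref{distspectral} with $m=1$ we have $d_2(u,\mathcal{M}_1)^2=\sum_{k\geq2}\|u_k\|_2^2$. Subtracting $(\mu_2-\mu_1)$ times this from \eqref{deficit} gives
\[
\sum_{k\geq2}(\mu_k-\mu_2)\|u_k\|_2^2 .
\]
The $k=2$ term vanishes, which is \eqref{L2rem}. All series converge absolutely, since $u\in\mathcal{D}$ means $\sum_k\mu_k\|u_k\|_2^2<\infty$; so the termwise manipulation is legitimate.

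\textbf{The gradient identity.} By \eqref{distspectral}, $d_{\mathcal{E}}(u,\mathcal{M}_1)^2=\sum_{k\geq2}\mu_k\|u_k\|_2^2$. Hence
\[
\delta(u)-\Big(1-\frac{\mu_1}{\mu_2}\Big)d_{\mathcal{E}}(u,\mathcal{M}_1)^2=\sum_{k\geq2}\Big(\mu_k-\mu_1-\mu_k+\frac{\mu_1\mu_k}{\mu_2}\Big)\|u_k\|_2^2=\frac{\mu_1}{\mu_2}\sum_{k\geq2}(\mu_k-\mu_2)\|u_k\|_2^2 ,
\]
and again the $k=2$ term drops, giving \eqref{Gradrem}. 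Alternatively, this follows from \eqref{L2rem} and \eqref{link}, as in the proof of Theorem \ref{Tequiv} with $\alpha=\mu_2-\mu_1$, since $\alpha/(\mu_1+\alpha)=1-\mu_1/\mu_2$.

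\textbf{Conclusion.} Since $\mu_k>\mu_2$ for every $k\geq3$, both right-hand sides are nonnegative. This yields \eqref{L2stab} and \eqref{Gradstab}. Equality holds in either inequality if and only if $u_k=0$ for all $k\geq3$, that is, $u=\bar u+u_1+u_2\in\mathcal{M}_2$. For sharpness and attainment, take $u\in E_2\setminus\{0\}$, which exists because $E_2$ is a nontrivial eigenspace. Then equality holds with both sides positive, namely $d_2(u,\mathcal{M}_1)^2=\|u\|_2^2>0$. So any larger constant would fail for this $u$.

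No step is a genuine obstacle. The only point needing care is the justification of \eqref{distspectral}, namely that the infima are attained at the spectral projection. This is already stated in the preceding text: it follows because $\mathcal{M}_1$ is orthogonal to $\bigoplus_{k\geq2}E_k$ both in $L^2(\mu)$ and for $\mathcal{E}$, so Pythagoras applies in both norms.
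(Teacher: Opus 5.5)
Your proposal is correct and follows the paper's proof essentially line by line: you subtract the spectral expressions \eqref{distspectral} from \eqref{deficit} to obtain the remainders \eqref{L2rem} and \eqref{Gradrem}, read off nonnegativity and the equality case $u_k=0$ for $k\geq3$, and test with an element of $E_2$. The paper's test function is $\Phi+\varepsilon\psi$ with $\Phi\in\mathcal{M}_1$ and $\psi\in E_2\setminus\{0\}$, of which yours is the case $\Phi=0$, $\varepsilon=1$; your alternative route to \eqref{Gradrem} via \eqref{link} matches the remark the paper makes right after the proof.
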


\begin{proof}
By \eqref{deficit} and \eqref{distspectral},
\begin{align*}
\delta(u)-(\mu_2-\mu_1)\,d_2(u,\mathcal{M}_1)^2&=\sum_{k\geq2}(\mu_k-\mu_1)\|u_k\|_2^2-(\mu_2-\mu_1)\sum_{k\geq2}\|u_k\|_2^2\\
&=\sum_{k\geq3}(\mu_k-\mu_2)\|u_k\|_2^2\geq0,
\end{align*}
which proves \eqref{L2rem} and \eqref{L2stab}. Similarly,
\begin{align*}
&\delta(u)-\left(1-\frac{\mu_1}{\mu_2}\right)d_{\mathcal{E}}(u,\mathcal{M}_1)^2\\
&=\sum_{k\geq2}\left[(\mu_k-\mu_1)-\left(1-\frac{\mu_1}{\mu_2}\right)\mu_k\right]\|u_k\|_2^2\\
&=\sum_{k\geq2}\mu_1\left(\frac{\mu_k}{\mu_2}-1\right)\|u_k\|_2^2=\frac{\mu_1}{\mu_2}\sum_{k\geq3}(\mu_k-\mu_2)\|u_k\|_2^2\geq0,
\end{align*}
which proves \eqref{Gradrem} and \eqref{Gradstab}. In both cases the right hand side vanishes if and only if $u_k=0$ for all $k\geq3$, that is, $u\in\mathcal{M}_2$. To see that the constants are sharp, we take $u=\Phi+\varepsilon\psi$ with $\Phi\in\mathcal{M}_1$ and $\psi\in E_2\setminus\{0\}$. Then $\delta(u)=(\mu_2-\mu_1)\varepsilon^2\|\psi\|_2^2$, $d_2(u,\mathcal{M}_1)^2=\varepsilon^2\|\psi\|_2^2$ and $d_{\mathcal{E}}(u,\mathcal{M}_1)^2=\mu_2\varepsilon^2\|\psi\|_2^2$, so that the equality holds in both \eqref{L2stab} and \eqref{Gradstab}.
\end{proof}

We note that \eqref{Gradstab} also follows from \eqref{L2stab} and Theorem \ref{Tequiv} with $\alpha=\mu_2-\mu_1$, since $\frac{\alpha}{\mu_1+\alpha}=\frac{\mu_2-\mu_1}{\mu_2}=1-\frac{\mu_1}{\mu_2}$.

The right hand sides of \eqref{L2rem} and \eqref{Gradrem} are again nonnegative quadratic forms. They vanish if and only if $u\in\mathcal{M}_2$, and there is a spectral gap above $\mathcal{M}_2$. Therefore, they can be estimated from below by the distances to $\mathcal{M}_2$. This is the stability of the stability inequalities.

\begin{theorem}[Sharp stability of the stability inequalities]\label{Tstabstab}
For all $u\in\mathcal{D}$, we have
\begin{equation}\label{L2L2}
\delta(u)-(\mu_2-\mu_1)\,d_2(u,\mathcal{M}_1)^2\geq(\mu_3-\mu_2)\,d_2(u,\mathcal{M}_2)^2
\end{equation}
and
\begin{equation}\label{GradGrad}
\delta(u)-\left(1-\frac{\mu_1}{\mu_2}\right)d_{\mathcal{E}}(u,\mathcal{M}_1)^2\geq\mu_1\left(\frac{1}{\mu_2}-\frac{1}{\mu_3}\right)d_{\mathcal{E}}(u,\mathcal{M}_2)^2.
\end{equation}
The constants $\mu_3-\mu_2$ and $\mu_1\left(\frac{1}{\mu_2}-\frac{1}{\mu_3}\right)$ are sharp and can be attained, and the equality holds in \eqref{L2L2} or in \eqref{GradGrad} if and only if $u\in\mathcal{M}_3$. The remainders are respectively $\sum_{k\geq4}(\mu_k-\mu_3)\|u_k\|_2^2$ and $\frac{\mu_1}{\mu_3}\sum_{k\geq4}(\mu_k-\mu_3)\|u_k\|_2^2$.
\end{theorem}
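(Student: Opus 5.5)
The plan is to start from the exact remainder identities \eqref{L2rem} and \eqref{Gradrem} of Theorem \ref{Tstab} and rewrite them with the spectral formula \eqref{distspectral} for the distances to $\mathcal{M}_2$, namely
\[
d_2(u,\mathcal{M}_2)^2=\sum_{k\geq3}\|u_k\|_2^2,\qquad d_{\mathcal{E}}(u,\mathcal{M}_2)^2=\sum_{k\geq3}\mu_k\|u_k\|_2^2.
\]

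For \eqref{L2L2}, subtract $(\mu_3-\mu_2)d_2(u,\mathcal{M}_2)^2$ from the right-hand side of \eqref{L2rem}. The $k=3$ term cancels, and we are left with
\[
\sum_{k\geq3}(\mu_k-\mu_3)\|u_k\|_2^2=\sum_{k\geq4}(\mu_k-\mu_3)\|u_k\|_2^2\geq0.
\]
This is the claimed remainder.

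For \eqref{GradGrad}, start from \eqref{Gradrem}, whose remainder is $\frac{\mu_1}{\mu_2}\sum_{k\geq3}(\mu_k-\mu_2)\|u_k\|_2^2$. Subtract $\mu_1\bigl(\frac{1}{\mu_2}-\frac{1}{\mu_3}\bigr)\sum_{k\geq3}\mu_k\|u_k\|_2^2$ and compute the coefficient of $\|u_k\|_2^2$ for each $k$:
\[
\frac{\mu_1}{\mu_2}(\mu_k-\mu_2)-\mu_1\mu_k\Bigl(\frac{1}{\mu_2}-\frac{1}{\mu_3}\Bigr)=-\mu_1+\frac{\mu_1\mu_k}{\mu_3}=\frac{\mu_1}{\mu_3}(\mu_k-\mu_3).
\]
This vanishes for $k=3$ and is positive for $k\geq4$, which gives the stated remainder $\frac{\mu_1}{\mu_3}\sum_{k\geq4}(\mu_k-\mu_3)\|u_k\|_2^2$.

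Convergence is not an issue: every series involved is dominated by $\mathcal{E}(u)$ or by $\operatorname{Var}_\mu(u)$, which are finite for $u\in\mathcal{D}$ by \eqref{parseval}. So the termwise manipulations are legitimate.

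\emph{Equality cases.} Since $\mu_k-\mu_3>0$ for every $k\geq4$, each remainder is zero exactly when $u_k=0$ for all $k\geq4$, that is, when $u\in\mathcal{M}_3$.

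\emph{Sharpness.} Take $u=\Phi+\varepsilon\psi$ with $\Phi\in\mathcal{M}_2$ and $\psi\in E_3\setminus\{0\}$. For \eqref{L2L2}, the left side is $(\mu_3-\mu_2)\varepsilon^2\|\psi\|_2^2$ by \eqref{L2rem}, and $d_2(u,\mathcal{M}_2)^2=\varepsilon^2\|\psi\|_2^2$. For \eqref{GradGrad}, the left side is $\frac{\mu_1}{\mu_2}(\mu_3-\mu_2)\varepsilon^2\|\psi\|_2^2$ by \eqref{Gradrem}, and $d_{\mathcal{E}}(u,\mathcal{M}_2)^2=\mu_3\varepsilon^2\|\psi\|_2^2$; the ratio is $\mu_1\bigl(\frac1{\mu_2}-\frac1{\mu_3}\bigr)$. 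So both constants are attained, and no larger constant can hold.

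The only step that needs care is the coefficient algebra in the gradient case; everything else is immediate from \eqref{distspectral}.
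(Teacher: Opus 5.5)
Your proposal is correct and follows the paper's proof essentially verbatim: you subtract the spectral expressions for $d_2(u,\mathcal{M}_2)^2$ and $d_{\mathcal{E}}(u,\mathcal{M}_2)^2$ from the remainders \eqref{L2rem} and \eqref{Gradrem}, and your coefficient computation $\frac{\mu_1}{\mu_3}(\mu_k-\mu_3)$ is right. The equality cases and the sharpness test with $u=\Phi+\varepsilon\psi$, $\psi\in E_3\setminus\{0\}$, are handled exactly as in the paper.
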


\begin{proof}
By \eqref{L2rem} and \eqref{distspectral},
\begin{align*}
&\delta(u)-(\mu_2-\mu_1)\,d_2(u,\mathcal{M}_1)^2-(\mu_3-\mu_2)\,d_2(u,\mathcal{M}_2)^2\\
&=\sum_{k\geq3}\left[(\mu_k-\mu_2)-(\mu_3-\mu_2)\right]\|u_k\|_2^2=\sum_{k\geq4}(\mu_k-\mu_3)\|u_k\|_2^2\geq0.
\end{align*}
By \eqref{Gradrem} and \eqref{distspectral},
\begin{align*}
&\delta(u)-\left(1-\frac{\mu_1}{\mu_2}\right)d_{\mathcal{E}}(u,\mathcal{M}_1)^2-\mu_1\left(\frac{1}{\mu_2}-\frac{1}{\mu_3}\right)d_{\mathcal{E}}(u,\mathcal{M}_2)^2\\
&=\sum_{k\geq3}\left[\frac{\mu_1}{\mu_2}(\mu_k-\mu_2)-\mu_1\left(\frac{1}{\mu_2}-\frac{1}{\mu_3}\right)\mu_k\right]\|u_k\|_2^2\\
&=\sum_{k\geq3}\mu_1\left(\frac{\mu_k}{\mu_3}-1\right)\|u_k\|_2^2=\frac{\mu_1}{\mu_3}\sum_{k\geq4}(\mu_k-\mu_3)\|u_k\|_2^2\geq0.
\end{align*}
The equality cases and the sharpness of the constants follow as in the proof of Theorem \ref{Tstab}, by testing with $u=\Phi+\varepsilon\psi$, $\Phi\in\mathcal{M}_2$, $\psi\in E_3\setminus\{0\}$.
\end{proof}

We observe that $\mu_1\left(\frac{1}{\mu_2}-\frac{1}{\mu_3}\right)=\frac{\mu_1}{\mu_2}\left(1-\frac{\mu_2}{\mu_3}\right)$, and that the remainder of the gradient inequality after two steps is $\frac{\mu_1}{\mu_3}$ times the remainder of the $L^2$ one. Therefore the process can be repeated. We do this in the next theorem.

\begin{theorem}[The infinite sums]\label{Tseries}
Let $c_j:=\mu_1\left(\frac{1}{\mu_{j-1}}-\frac{1}{\mu_j}\right)$ for $j\geq2$, so that $c_2=1-\frac{\mu_1}{\mu_2}$ and $\sum_{j=2}^{k}c_j=1-\frac{\mu_1}{\mu_k}$. For all $u\in\mathcal{D}$ and all $m\geq2$, we have the exact identities
\begin{align}
\delta(u)&=\sum_{j=2}^{m}(\mu_j-\mu_{j-1})\,d_2(u,\mathcal{M}_{j-1})^2+\sum_{k\geq m+1}(\mu_k-\mu_m)\|u_k\|_2^2,\label{L2finite}\\
\delta(u)&=\sum_{j=2}^{m}c_j\,d_{\mathcal{E}}(u,\mathcal{M}_{j-1})^2+\frac{\mu_1}{\mu_m}\sum_{k\geq m+1}(\mu_k-\mu_m)\|u_k\|_2^2.\label{Gradfinite}
\end{align}
Consequently,
\begin{equation}\label{series}
\delta(u)=\sum_{j=2}^{\infty}(\mu_j-\mu_{j-1})\,d_2(u,\mathcal{M}_{j-1})^2=\mu_1\sum_{j=2}^{\infty}\left(\frac{1}{\mu_{j-1}}-\frac{1}{\mu_j}\right)d_{\mathcal{E}}(u,\mathcal{M}_{j-1})^2,
\end{equation}
where both series are convergent, with nondecreasing partial sums. Moreover, in \eqref{L2finite} and \eqref{Gradfinite} the remainders are nonnegative, they vanish if and only if $u\in\mathcal{M}_m$, and the last coefficients $\mu_m-\mu_{m-1}$ and $c_m$ cannot be replaced by any larger constants.
\end{theorem}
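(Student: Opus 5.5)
The plan is to work entirely in the spectral coordinates \eqref{decomp}, using \eqref{deficit} and \eqref{distspectral}, and to prove \eqref{L2finite} and \eqref{Gradfinite} by computing the coefficient of each $\|u_k\|_2^2$ on both sides. For \eqref{L2finite}, I substitute $d_2(u,\mathcal{M}_{j-1})^2=\sum_{k\geq j}\|u_k\|_2^2$ and exchange the order of summation; this is legitimate because all terms are nonnegative. The coefficient of $\|u_k\|_2^2$ in $\sum_{j=2}^m(\mu_j-\mu_{j-1})d_2(u,\mathcal{M}_{j-1})^2$ is then the telescoping sum $\sum_{j=2}^{\min(k,m)}(\mu_j-\mu_{j-1})=\mu_{\min(k,m)}-\mu_1$.

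Adding the remainder, which contributes $\mu_k-\mu_m$ exactly when $k\geq m+1$, gives the total coefficient $\mu_k-\mu_1$ for every $k\geq2$. This is precisely \eqref{deficit}. For \eqref{Gradfinite} I argue the same way using $d_{\mathcal{E}}(u,\mathcal{M}_{j-1})^2=\sum_{k\geq j}\mu_k\|u_k\|_2^2$. The coefficient of $\|u_k\|_2^2$ becomes
\[
\mu_k\sum_{j=2}^{\min(k,m)}c_j=\mu_k\Bigl(1-\frac{\mu_1}{\mu_{\min(k,m)}}\Bigr),
\]
again by telescoping, and this equals $\mu_k-\mu_1$ when $k\leq m$. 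When $k\geq m+1$ it equals $\mu_k-\mu_1\mu_k/\mu_m$, and adding the remainder coefficient $\frac{\mu_1}{\mu_m}(\mu_k-\mu_m)$ again yields $\mu_k-\mu_1$. Alternatively, both identities follow by induction on $m$, with Theorems \ref{Tstab} and \ref{Tstabstab} as the base cases $m=2,3$.

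For the infinite series \eqref{series}, the partial sums are nondecreasing because all terms are nonnegative, and they are bounded by $\delta(u)$ because the remainders are nonnegative. It remains to show the remainders tend to $0$. For the $L^2$ case,
\[
0\leq\sum_{k\geq m+1}(\mu_k-\mu_m)\|u_k\|_2^2\leq\sum_{k\geq m+1}\mu_k\|u_k\|_2^2,
\]
which is the tail of the convergent series $\mathcal{E}(u)=\sum_k\mu_k\|u_k\|_2^2$ for $u\in\mathcal{D}$, so it tends to $0$. The gradient remainder is $\mu_1/\mu_m\leq1$ times the same quantity, so it also tends to $0$. The only point needing care is this use of the finiteness of $\mathcal{E}(u)$ on the form domain, and that is the step I expect to be the (mild) main obstacle.

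The remainders vanish if and only if $u_k=0$ for all $k\geq m+1$, since $\mu_k>\mu_m$ for those $k$; that is, if and only if $u\in\mathcal{M}_m$. For sharpness of the last coefficient, I test with $u=\Phi+\varepsilon\psi$, where $\Phi\in\mathcal{M}_{m-1}$ and $\psi\in E_m\setminus\{0\}$. For this $u$ the remainders vanish, so equality holds in both identities. Moreover $d_2(u,\mathcal{M}_{m-1})^2=\varepsilon^2\|\psi\|_2^2>0$ and $d_{\mathcal{E}}(u,\mathcal{M}_{m-1})^2=\mu_m\varepsilon^2\|\psi\|_2^2>0$, so any larger last coefficient would make the right-hand side exceed $\delta(u)$. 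This is a contradiction, so neither $\mu_m-\mu_{m-1}$ nor $c_m$ can be increased.
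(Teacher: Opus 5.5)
Your proposal is correct and follows the paper's proof essentially step by step. The paper uses the same coefficient-wise telescoping identities for $\mu_k-\mu_1$ (split into $k\leq m$ and $k\geq m+1$), exchanges the order of summation, bounds both remainders by the tail $\sum_{k\geq m+1}\mu_k\|u_k\|_2^2$ of $\mathcal{E}(u)$, and tests sharpness with $\Phi+\varepsilon\psi$, $\psi\in E_m\setminus\{0\}$.
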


\begin{proof}
Let $k\geq2$ and $m\geq2$. If $k\leq m$, then by telescoping
\[
\mu_k-\mu_1=\sum_{j=2}^{k}(\mu_j-\mu_{j-1}),\qquad \mu_k-\mu_1=\mu_k\left(1-\frac{\mu_1}{\mu_k}\right)=\mu_k\sum_{j=2}^{k}c_j,
\]
while if $k\geq m+1$, then
\[
\mu_k-\mu_1=\sum_{j=2}^{m}(\mu_j-\mu_{j-1})+(\mu_k-\mu_m),\qquad \mu_k-\mu_1=\mu_k\sum_{j=2}^{m}c_j+\frac{\mu_1}{\mu_m}(\mu_k-\mu_m),
\]
where for the last one we used $\mu_k\left(1-\frac{\mu_1}{\mu_m}\right)+\frac{\mu_1}{\mu_m}(\mu_k-\mu_m)=\mu_k-\mu_1$. Inserting these expressions into \eqref{deficit} and exchanging the order of summation, we get
\begin{align*}
\delta(u)&=\sum_{j=2}^{m}(\mu_j-\mu_{j-1})\sum_{k\geq j}\|u_k\|_2^2+\sum_{k\geq m+1}(\mu_k-\mu_m)\|u_k\|_2^2,\\
\delta(u)&=\sum_{j=2}^{m}c_j\sum_{k\geq j}\mu_k\|u_k\|_2^2+\frac{\mu_1}{\mu_m}\sum_{k\geq m+1}(\mu_k-\mu_m)\|u_k\|_2^2,
\end{align*}
which are \eqref{L2finite} and \eqref{Gradfinite} by \eqref{distspectral}. The remainders are clearly nonnegative and vanish if and only if $u_k=0$ for all $k\geq m+1$. Testing with $u=\Phi+\varepsilon\psi$, $\Phi\in\mathcal{M}_{m-1}$, $\psi\in E_m\setminus\{0\}$, we see that the last coefficients are sharp. Finally, both remainders are bounded by
\[
\sum_{k\geq m+1}\mu_k\|u_k\|_2^2\to0\quad\text{as }m\to\infty,
\]
since $\sum_{k}\mu_k\|u_k\|_2^2=\mathcal{E}(u)<\infty$, and therefore the partial sums, which are nondecreasing, converge to $\delta(u)$. This proves \eqref{series}.
\end{proof}

The coefficients of the two series in \eqref{series} are very different. The coefficients $\mu_j-\mu_{j-1}$ of the $L^2$-series are the spectral gaps, which have the same dimension as the eigenvalues and are not summable, since $\mu_j\to\infty$. The coefficients $c_j=\mu_1\left(\frac{1}{\mu_{j-1}}-\frac{1}{\mu_j}\right)$ of the gradient series are dimensionless and positive, they tend to $0$, and they always satisfy
\begin{equation}\label{weightsum}
\sum_{j\geq2}c_j=\lim_{k\to\infty}\left(1-\frac{\mu_1}{\mu_k}\right)=1.
\end{equation}
Hence the gradient series gives the deficit as a weighted average of the gradient distances $d_{\mathcal{E}}(u,\mathcal{M}_{j-1})^2$, with the weights $c_j$.

\section{The Euclidean ball: identification of the spectrum and proofs of Theorems \ref{TstabBall}, \ref{TstabstabBall} and \ref{TseriesBall}}\label{sec:ball}

In this section we apply the results of Section \ref{sec:general} to the Neumann Laplacian on $B_R$, that is, $A=-\Delta$ with the Neumann boundary condition, $\mathcal{E}(u)=\int_{B_R}|\nabla u|^2dx$, $\mathcal{D}=W^{1,2}(B_R)$ and $\bar u=u_{B_R}$. Theorems \ref{TstabBall}, \ref{TstabstabBall} and \ref{TseriesBall} are then exactly Theorems \ref{Tstab}, \ref{Tstabstab} and \ref{Tseries}, once we know that $\mu_1=\alpha_N^2/R^2$, $\mu_2=\beta_N^2/R^2$, and once we know $\mu_3$. The purpose of this section is to establish these facts.

\subsection{The Neumann eigenvalues of the ball}
Let $x=r\omega$ with $r=|x|$ and $\omega\in\mathbb{S}^{N-1}$. Let $Y_\ell$ be a spherical harmonic of degree $\ell\geq0$, that is, the restriction to $\mathbb{S}^{N-1}$ of a homogeneous harmonic polynomial of degree $\ell$. Then $-\Delta_{\mathbb{S}^{N-1}}Y_\ell=\ell(\ell+N-2)Y_\ell$, and the space $\mathcal{H}_\ell$ of all spherical harmonics of degree $\ell$ has dimension
\[
\dim\mathcal{H}_\ell=\binom{N+\ell-1}{\ell}-\binom{N+\ell-3}{\ell-2},
\]
with the convention $\binom{m}{j}=0$ for $j<0$, which is $1$, $N$, $\frac{(N-1)(N+2)}{2}$, $\frac{(N-1)N(N+4)}{6}$ for $\ell=0,1,2,3$. Writing $u(x)=f(r)Y_\ell(\omega)$, the equation $-\Delta u=\lambda u$ becomes
\begin{equation}\label{radialeq}
-f''(r)-\frac{N-1}{r}f'(r)+\frac{\ell(\ell+N-2)}{r^2}f(r)=\lambda f(r),
\end{equation}
and the solution which is regular at the origin is, up to a constant,
\[
f(r)=r^{1-N/2}J_\nu\left(\frac{\rho r}{R}\right),\qquad \rho:=R\sqrt{\lambda},\qquad \nu:=\ell+\frac N2-1,
\]
where $\lambda>0$ is the eigenvalue parameter. The Neumann condition $f'(R)=0$ reads $\rho J_\nu'(\rho)=\left(\frac N2-1\right)J_\nu(\rho)$. Using \eqref{recurrence}, this is equivalent to
\begin{equation}\label{Neumanneq}
\ell\,J_{\nu}(\rho)=\rho\,J_{\nu+1}(\rho),\qquad \nu=\ell+\frac N2-1.
\end{equation}
For $\ell=0$, \eqref{Neumanneq} reads $\rho J_{N/2}(\rho)=0$. For $\ell=1$ and $\ell=2$, \eqref{Neumanneq} is exactly \eqref{alphaN} and \eqref{betaN}, respectively. For $\ell\geq1$ we denote by $\rho_{\ell,1}<\rho_{\ell,2}<\cdots$ the positive roots of \eqref{Neumanneq}; we do not count the trivial root $\rho=0$, since it gives $f\equiv0$. For $\ell=0$ we set $\rho_{0,1}:=0$, which corresponds to the constants, and we let $\rho_{0,2}<\rho_{0,3}<\cdots$ be the positive roots of \eqref{Neumanneq} with $\ell=0$, that is, of $J_{N/2}(\rho)=0$. We set
\[
\lambda_{\ell,k}:=\frac{\rho_{\ell,k}^2}{R^2}.
\]
Then the Neumann spectrum of $B_R$ is the set of all $\lambda_{\ell,k}$, $\ell\geq0$, $k\geq1$, and the eigenspace of $\lambda_{\ell,k}$ is
\[
\left\{r^{1-N/2}J_{\nu}\left(\frac{\rho_{\ell,k}r}{R}\right)Y_\ell(\omega):\ Y_\ell\in\mathcal{H}_\ell\right\}.
\]
The distinct eigenvalues $\mu_1<\mu_2<\cdots$ of Section \ref{sec:general} are the numbers $\lambda_{\ell,k}$ listed in the increasing order and without repetition. We emphasize that \eqref{Neumanneq} is a family of transcendental equations, one for each $\ell$, and the ordering of their roots for different $\ell$ is not obvious. We now study this question.

\subsection{Ordering of the Neumann eigenvalues}
We will use two classical facts on the zeros $j_{\nu,k}$ of the Bessel functions $J_\nu$, see \cite[Chapter XV]{Wat44}. The first one is that the zeros are increasing with respect to the order:
\begin{equation}\label{zerosincrease}
j_{\nu,k}<j_{\nu+1,k},\qquad \nu\geq0,\ k\geq1.
\end{equation}
The second one is the Mittag-Leffler expansion
\begin{equation}\label{ML}
\frac{\rho J_{\nu+1}(\rho)}{J_\nu(\rho)}=\sum_{k\geq1}\frac{2\rho^2}{j_{\nu,k}^2-\rho^2},\qquad \nu\geq0,
\end{equation}
which shows that the function
\[
F_\nu(\rho):=\frac{\rho J_{\nu+1}(\rho)}{J_\nu(\rho)}
\]
is strictly increasing on $(0,j_{\nu,1})$, where it increases from $0$ to $+\infty$, and on each interval $(j_{\nu,k},j_{\nu,k+1})$, where it increases from $-\infty$ to $+\infty$.

It is convenient to rewrite \eqref{Neumanneq} as $F_\nu(\rho)=\ell$, because then the left hand side is the monotone function $F_\nu$ while the right hand side is a constant. The quotient $F_\nu$ is defined only at the points where $J_\nu(\rho)\neq0$, so we first check that every root of \eqref{Neumanneq} satisfies $J_\nu(\rho)\neq0$. Let $\ell\geq1$ and let $\rho>0$ satisfy \eqref{Neumanneq}. If we had $J_\nu(\rho)=0$, then \eqref{Neumanneq} would give $\rho J_{\nu+1}(\rho)=0$, hence also $J_{\nu+1}(\rho)=0$. But $J_\nu$ and $J_{\nu+1}$ have no common positive zero: by \eqref{recurrence}, the two equalities $J_\nu(\rho)=J_{\nu+1}(\rho)=0$ would give $J_\nu(\rho)=J_\nu'(\rho)=0$, and since the Bessel equation
\[
\rho^2y''(\rho)+\rho y'(\rho)+(\rho^2-\nu^2)y(\rho)=0
\]
is regular on $(0,\infty)$, the uniqueness theorem would give $J_\nu\equiv0$, which is not true. Therefore $J_\nu(\rho)\neq0$ at every root of \eqref{Neumanneq}, and we may divide \eqref{Neumanneq} by $J_\nu(\rho)$. Conversely, every solution of $F_\nu(\rho)=\ell$ clearly satisfies \eqref{Neumanneq}. We conclude that, for $\ell\geq1$,
\begin{equation}\label{Feq}
\rho>0\ \text{ satisfies }\eqref{Neumanneq}\qquad\Longleftrightarrow\qquad F_\nu(\rho)=\ell,
\end{equation}
and this is the form of \eqref{Neumanneq} that we use from now on.

\begin{lemma}\label{Lorder}
Let $N\geq2$. The following hold.
\begin{enumerate}
\item[(i)] For every fixed $k\geq1$, $\lambda_{\ell,k}\leq\lambda_{\ell+1,k}$ for all $\ell\geq0$.
\item[(ii)] For every $\ell\geq1$, with $\nu=\ell+\frac N2-1$, we have $\rho_{\ell,1}<j_{\nu,1}$ and $j_{\nu,k}<\rho_{\ell,k+1}<j_{\nu,k+1}$ for $k\geq1$.
\item[(iii)] For every $\ell\geq1$, with $\nu=\ell+\frac N2-1$, we have $\rho_{\ell,1}<j_{\nu-1,1}$.
\item[(iv)] For every $k\geq1$, $\rho_{0,k+1}=j_{N/2,k}$.
\item[(v)] For every $\ell\geq1$, $\rho_{\ell,1}<\rho_{\ell+1,1}$; that is, the numbers $\lambda_{\ell,1}$ are strictly increasing in $\ell\geq1$.
\end{enumerate}
\end{lemma}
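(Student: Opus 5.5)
The five items have different natures, so I would prove them in the order (iv), (ii), (iii), (v), (i). Items (iv) and (ii) come straight from the monotonicity of $F_\nu$ in \eqref{ML}. Items (iii) and (v) come from combining \eqref{ML} with the recurrence \eqref{recurrence} and \eqref{zerosincrease}. Item (i) I would obtain variationally, by the min-max principle for the radial Sturm--Liouville problem \eqref{radialeq}.

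Item (iv) is immediate from the definitions: for $\ell=0$, \eqref{Neumanneq} reads $\rho J_{N/2}(\rho)=0$, and we set $\rho_{0,1}=0$, so $\rho_{0,k+1}=j_{N/2,k}$. For (ii), I use \eqref{Feq}, so the roots are the solutions of $F_\nu(\rho)=\ell$ with $\ell\ge1$. The function $F_\nu$ increases strictly from $0$ to $+\infty$ on $(0,j_{\nu,1})$, so there is exactly one root there. It increases from $-\infty$ to $+\infty$ on each $(j_{\nu,k},j_{\nu,k+1})$, so there is exactly one root in each of these intervals. Counting in increasing order gives $\rho_{\ell,1}<j_{\nu,1}$ and $j_{\nu,k}<\rho_{\ell,k+1}<j_{\nu,k+1}$.

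For (iii), note that $\nu-1=\ell+\frac N2-2\ge0$, so \eqref{zerosincrease} gives $j_{\nu-1,1}<j_{\nu,1}$. At $\rho=j_{\nu-1,1}$ the identity $\rho J_{\nu+1}=2\nu J_\nu-\rho J_{\nu-1}$ gives $F_\nu(j_{\nu-1,1})=2\nu=2\ell+N-2>\ell$. Since $F_\nu$ is increasing on $(0,j_{\nu,1})$ and $\rho_{\ell,1}$ is the unique point there with $F_\nu=\ell$, we get $\rho_{\ell,1}<j_{\nu-1,1}$.

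For (v), divide \eqref{ML} by $\rho^2$: $F_\nu(\rho)/\rho^2=\sum_k 2/(j_{\nu,k}^2-\rho^2)$. On $(0,j_{\nu,1})$ every term is positive, and by \eqref{zerosincrease} each term strictly decreases when $\nu$ is replaced by $\nu+1$. Hence $0<F_{\nu+1}(\rho)<F_\nu(\rho)$ on $(0,j_{\nu,1})$. Evaluating at $\rho_{\ell,1}\in(0,j_{\nu,1})$ gives $F_{\nu+1}(\rho_{\ell,1})<\ell<\ell+1$. Now $\rho_{\ell+1,1}$ is the unique point of $(0,j_{\nu+1,1})$ where the increasing function $F_{\nu+1}$ equals $\ell+1$, and $\rho_{\ell,1}<j_{\nu,1}<j_{\nu+1,1}$. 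Therefore $\rho_{\ell,1}<\rho_{\ell+1,1}$.

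For (i), I would argue variationally rather than by root comparison. Comparing roots on the later intervals $(j_{\nu,k},j_{\nu,k+1})$ is messy, because $F_\nu$ is no longer positive there. Instead, for each $\ell$ I consider the quadratic form
\[
Q_\ell(f)=\int_0^R\Big(f'^2+\frac{\ell(\ell+N-2)}{r^2}f^2\Big)r^{N-1}dr
\]
on $L^2(r^{N-1}dr)$, with the natural Neumann domain. Its eigenvalues are exactly $\lambda_{\ell,1}<\lambda_{\ell,2}<\cdots$, each one simple, since by (ii) and (iv) these are all the regular solutions of \eqref{radialeq} satisfying $f'(R)=0$. The potential term increases with $\ell$, so $Q_\ell\le Q_{\ell+1}$. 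The form domain of $Q_{\ell+1}$ is contained in that of $Q_\ell$ (weighted Sobolev space with the $r^{-2}$ term). The Courant--Fischer min-max principle then gives $\lambda_{\ell,k}\le\lambda_{\ell+1,k}$ for every $k$. For $\ell=0$ this includes $\lambda_{0,1}=0$ as the first eigenvalue, which is consistent with the convention $\rho_{0,1}=0$.

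The main obstacle is this last step. One must justify that the radial operator with weight $r^{N-1}$ is self-adjoint with compact resolvent. One must also check that its eigenvalues, counted in increasing order, are exactly the $\lambda_{\ell,k}$, with no extra singular solutions and no missing roots. Both facts follow from the completeness of the separation of variables $u=f(r)Y_\ell(\omega)$ together with the root counting in (ii) and (iv). If this framework proves cumbersome, a fallback is to compare the numbers of roots of $F_\nu=\ell$ and $F_{\nu+1}=\ell+1$ below a given $\rho$, again using the interlacing from \eqref{zerosincrease}.
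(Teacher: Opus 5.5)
Your proposal is correct and follows the paper's proof item by item: the min-max comparison of the radial Rayleigh quotients for (i), the Mittag-Leffler monotonicity of $F_\nu$ for (ii), the recurrence evaluated at $j_{\nu-1,1}$ for (iii), and the termwise comparison $F_{\nu+1}<F_\nu$ on $(0,j_{\nu,1})$ for (v). The only differences are cosmetic. You state (v) directly rather than by contradiction, and you spell out the form-domain and simplicity caveats for (i) that the paper leaves implicit.
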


\begin{proof}
(i) The number $\lambda_{\ell,k}$ is the $k$-th eigenvalue of the Sturm--Liouville problem \eqref{radialeq} with the Neumann condition at $r=R$, whose Rayleigh quotient is
\[
\frac{\displaystyle\int_0^R\left(|f'(r)|^2+\frac{\ell(\ell+N-2)}{r^2}|f(r)|^2\right)r^{N-1}dr}{\displaystyle\int_0^R|f(r)|^2r^{N-1}dr}.
\]
The potential $\frac{\ell(\ell+N-2)}{r^2}$ is increasing in $\ell$, and the form domain of this quotient does not increase when $\ell$ increases, so (i) follows from the min-max principle.

(ii) By \eqref{Feq}, the roots $\rho_{\ell,k}$ are the solutions of $F_\nu(\rho)=\ell$. By \eqref{ML}, there is exactly one solution in $(0,j_{\nu,1})$ and exactly one solution in each $(j_{\nu,k},j_{\nu,k+1})$.

(iii) By \eqref{zerosincrease}, $j_{\nu-1,1}<j_{\nu,1}$. By the monotonicity of $F_\nu$ on $(0,j_{\nu,1})$, it is enough to show that $F_\nu(j_{\nu-1,1})>\ell$. By \eqref{recurrence}, we have
\[
F_\nu(\rho)=2\nu-\frac{\rho J_{\nu-1}(\rho)}{J_\nu(\rho)},
\]
and at $\rho=j_{\nu-1,1}$ the last term vanishes. Hence $F_\nu(j_{\nu-1,1})=2\nu=2\ell+N-2>\ell$, as desired.

(iv) This is the definition of $\rho_{0,k+1}$, since \eqref{Neumanneq} with $\ell=0$ reads $\rho J_{N/2}(\rho)=0$.

(v) Let $\nu=\ell+\frac N2-1$, so that, by \eqref{Feq}, $F_\nu(\rho_{\ell,1})=\ell$ and $F_{\nu+1}(\rho_{\ell+1,1})=\ell+1$. If $0<\rho<j_{\nu,1}$, then $\rho<j_{\nu,k}<j_{\nu+1,k}$ for every $k$ by \eqref{zerosincrease}, so that all the terms in \eqref{ML} are positive and
\[
0<\frac{2\rho^2}{j_{\nu+1,k}^2-\rho^2}<\frac{2\rho^2}{j_{\nu,k}^2-\rho^2},\qquad k\geq1,
\]
so that $F_{\nu+1}(\rho)<F_\nu(\rho)$ on $(0,j_{\nu,1})$. Suppose now that $\rho_{\ell+1,1}\leq\rho_{\ell,1}$. Since $\rho_{\ell,1}<j_{\nu,1}$ by (ii), and since $F_\nu$ is increasing on $(0,j_{\nu,1})$, we would get
\[
\ell+1=F_{\nu+1}(\rho_{\ell+1,1})<F_\nu(\rho_{\ell+1,1})\leq F_\nu(\rho_{\ell,1})=\ell,
\]
which is impossible. Hence $\rho_{\ell,1}<\rho_{\ell+1,1}$.
\end{proof}

\begin{theorem}\label{Tlevels}
Let $N\geq2$. Then $\mu_1=\lambda_{1,1}=\frac{\alpha_N^2}{R^2}$ and $\mu_2=\lambda_{2,1}=\frac{\beta_N^2}{R^2}$, with
\[
E_1=\left\{|x|^{1-N/2}J_{N/2}\left(\frac{\alpha_N|x|}{R}\right)\frac{x\cdot\mathbf{a}}{|x|}:\ \mathbf{a}\in\mathbb{R}^N\right\}
\]
and
\[
E_2=\left\{|x|^{1-N/2}J_{N/2+1}\left(\frac{\beta_N|x|}{R}\right)Y_2\left(\frac{x}{|x|}\right):\ Y_2\in\mathcal{H}_2\right\}.
\]
In particular $\dim E_1=N$ and $\dim E_2=\frac{(N-1)(N+2)}{2}$.
\end{theorem}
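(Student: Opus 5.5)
We use Lemma \ref{Lorder} to compare all the Neumann eigenvalues $\lambda_{\ell,k}$ with $\lambda_{1,1}$ and $\lambda_{2,1}$. The spectrum is $\{\lambda_{\ell,k}\}$. By part (ii) of the lemma, for each $\ell\geq1$ the first root is the smallest: $\rho_{\ell,1}<j_{\nu,1}<\rho_{\ell,k}$ for $k\geq2$. Part (i) then gives, for every fixed $k$, $\lambda_{\ell,k}\geq\lambda_{1,k}\geq\lambda_{1,1}$ when $\ell\geq1$ and $k\geq2$. By part (v), $\lambda_{\ell,1}$ is strictly increasing in $\ell\geq1$. The only candidates below $\lambda_{2,1}$ other than $\lambda_{1,1}$ are therefore the radial levels $\lambda_{0,k+1}=j_{N/2,k}^2/R^2$ from part (iv), together with the higher roots $\lambda_{1,k}$, $k\geq2$.

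\textbf{Step 1: degree one, $k\geq2$.} Take $\ell=1$, so $\nu=N/2$. Part (ii) gives $\rho_{1,2}>j_{N/2,1}$.

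\textbf{Step 2: radial modes against $\rho_{2,1}$.} Take $\ell=2$, so $\nu=N/2+1$. Part (iii) gives $\rho_{2,1}<j_{\nu-1,1}=j_{N/2,1}$. Hence $\lambda_{2,1}<\lambda_{0,2}\leq\lambda_{0,k+1}$ for all $k\geq1$, and also $\lambda_{2,1}<\lambda_{1,k}$ for $k\geq2$ by Step 1.

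\textbf{Step 3: remaining cases.} For $\ell\geq2$ and $k\geq2$, part (i) gives $\lambda_{\ell,k}\geq\lambda_{1,k}>\lambda_{2,1}$. For $\ell\geq3$, part (v) gives $\lambda_{\ell,1}>\lambda_{2,1}$. Combined with $\lambda_{1,1}<\lambda_{2,1}$ from part (v), every nonzero eigenvalue except $\lambda_{1,1}$ and $\lambda_{2,1}$ is strictly larger than $\lambda_{2,1}$.

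\textbf{Step 4: identification of $\mu_1,\mu_2$ and the eigenspaces.} Steps 1--3 give $\mu_1=\lambda_{1,1}$ and $\mu_2=\lambda_{2,1}$. We must also check that no other pair $(\ell,k)$ produces the same value, since otherwise the eigenspaces would be larger. All the comparisons above are strict, so we only need $\lambda_{1,1}<\lambda_{0,2}$. This holds by part (ii), since $\rho_{1,1}<j_{N/2,1}$.

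By definition, $\rho_{1,1}=\alpha_N$ and $\rho_{2,1}=\beta_N$, because \eqref{Neumanneq} with $\ell=1,2$ is exactly \eqref{alphaN} and \eqref{betaN}. The eigenspaces then follow from the description of the spectrum in the previous subsection, with $\nu=N/2$ and $\nu=N/2+1$:
\begin{itemize}
\item for $\ell=1$, the spherical harmonics $Y_1$ are $x\cdot\mathbf{a}/|x|$, which gives $E_1$;
\item for $\ell=2$, we get $E_2$ with $Y_2\in\mathcal{H}_2$.
\end{itemize}
The dimensions are those of $\mathcal{H}_1$ and $\mathcal{H}_2$, namely $N$ and $\frac{(N-1)(N+2)}{2}$.

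The key step is Step 2, the comparison of the degree-two mode with the first radial mode. Parts (i) and (v) cannot give it, because they only compare eigenvalues with the same $k$ or with $k=1$. It is part (iii), comparing $\rho_{2,1}$ with $j_{N/2,1}$ through the value $F_\nu(j_{\nu-1,1})=2\nu$, that settles it in all dimensions.
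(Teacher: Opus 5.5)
Your proof is correct and follows essentially the paper's argument: the decisive comparison $\rho_{2,1}<j_{N/2,1}$ comes from Lemma \ref{Lorder}(iii), parts (ii), (iv), (v) handle the other levels, and the uniqueness of the pairs $(1,1)$ and $(2,1)$ gives the eigenspaces. The only minor difference is in the levels with $k\geq2$: you bound them through $\lambda_{\ell,k}\geq\lambda_{1,k}\geq\lambda_{1,2}>\lambda_{0,2}$, using (ii) with $\ell=1$, whereas the paper applies (i) starting from $\ell=0$ to get $\lambda_{0,2}\leq\lambda_{\ell,k}$ directly (and your extra check $\lambda_{1,1}<\lambda_{0,2}$ in Step 4 is already implied by $\lambda_{1,1}<\lambda_{2,1}<\lambda_{0,2}$).
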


\begin{proof}
By Lemma \ref{Lorder} (iv), $\lambda_{0,2}=j_{N/2,1}^2/R^2$, and by Lemma \ref{Lorder} (i), $\lambda_{0,2}\leq\lambda_{\ell,k}$ for every $\ell\geq0$ and every $k\geq2$. By Lemma \ref{Lorder} (ii) with $\ell=1$ and $\nu=N/2$ we have $\rho_{1,1}<j_{N/2,1}$, and by Lemma \ref{Lorder} (iii) with $\ell=2$ and $\nu=\frac N2+1$ we have $\rho_{2,1}<j_{N/2,1}$. Combining these with Lemma \ref{Lorder} (v), we obtain
\[
\lambda_{0,1}=0<\lambda_{1,1}<\lambda_{2,1}<\lambda_{0,2}\leq\lambda_{\ell,k}\ (k\geq2),\qquad \lambda_{2,1}<\lambda_{\ell,1}\ (\ell\geq3).
\]
Hence $\mu_1=\lambda_{1,1}$ and $\mu_2=\lambda_{2,1}$, and each of these two values is attained by exactly one pair $(\ell,k)$, namely $(1,1)$ and $(2,1)$. The corresponding eigenspaces are therefore the ones displayed above.
\end{proof}

We now consider the third level. Here the answer depends on the dimension.

\begin{theorem}\label{Tthird}
Let $N\geq2$ and let $\rho_{3,1}$, $j_{N/2,1}$ be as in Theorem \ref{TstabstabBall}. Then $\mu_3=\min\{\lambda_{0,2},\lambda_{3,1}\}=\frac{1}{R^2}\min\{j_{N/2,1}^2,\rho_{3,1}^2\}$, and
\begin{equation}\label{criterion}
\rho_{3,1}<j_{N/2,1}\quad\Longleftrightarrow\quad j_{N/2,1}^2<(N+1)(N+2).
\end{equation}
Moreover, \eqref{criterion} holds for every $N\geq4$ and fails for $N=2,3$. Consequently, if $N\geq4$ then
\[
E_3=\left\{|x|^{1-N/2}J_{N/2+2}\left(\frac{\rho_{3,1}|x|}{R}\right)Y_3\left(\frac{x}{|x|}\right):\ Y_3\in\mathcal{H}_3\right\},
\]
which has dimension $\dim E_3=\frac{(N-1)N(N+4)}{6}$,
while if $N=2,3$ then $E_3$ is the one dimensional space spanned by the radial function $|x|^{1-N/2}J_{N/2-1}\left(\frac{j_{N/2,1}|x|}{R}\right)$.
\end{theorem}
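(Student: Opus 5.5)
The plan is to reuse the ordering already obtained in Lemma \ref{Lorder} and in the proof of Theorem \ref{Tlevels}, and then to compare the only two remaining candidates for the third level by evaluating the monotone function $F_\nu$ at a Bessel zero.

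\emph{Reduction to two candidates.} By Theorem \ref{Tlevels}, $\mu_1=\lambda_{1,1}$ and $\mu_2=\lambda_{2,1}$, and every other eigenvalue lies in one of two groups.
\begin{itemize}
\item[(a)] The values $\lambda_{\ell,k}$ with $k\geq2$. By Lemma \ref{Lorder} (i) and (iv), all of them are $\geq\lambda_{0,2}=j_{N/2,1}^2/R^2$.
\item[(b)] The values $\lambda_{\ell,1}$ with $\ell\geq3$. By Lemma \ref{Lorder} (v), all of them are $\geq\lambda_{3,1}$, with equality only for $\ell=3$.
\end{itemize}
Hence $\mu_3=\min\{\lambda_{0,2},\lambda_{3,1}\}$. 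For the eigenspace, note first that in group (a) the only pair with value exactly $\lambda_{0,2}$ is $(0,2)$. Indeed, if $\ell\geq1$ and $k\geq2$, then Lemma \ref{Lorder} (ii) and \eqref{zerosincrease} give $\rho_{\ell,k}>j_{\ell+N/2-1,k-1}\geq j_{N/2,1}$. Consequently, once we show $\lambda_{0,2}\neq\lambda_{3,1}$, the minimizing value is attained by exactly one pair $(\ell,k)$. Then $E_3$ is either the $\mathcal{H}_3$-family or the radial family, with the stated dimensions. For the radial family, use $r^{1-N/2}J_{N/2-1}$ after rewriting $\nu=N/2-1$ for $\ell=0$.

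\emph{The criterion \eqref{criterion}.} Set $\nu=N/2+2$ and $j:=j_{N/2,1}$. By \eqref{zerosincrease}, $j<j_{\nu,1}$. Since $F_\nu$ increases on $(0,j_{\nu,1})$ and $F_\nu(\rho_{3,1})=3$ with $\rho_{3,1}<j_{\nu,1}$, we have $\rho_{3,1}<j$ if and only if $F_\nu(j)>3$. We compute $F_\nu(j)$ from the three-term recurrence $\rho J_{\mu+1}=2\mu J_\mu-\rho J_{\mu-1}$, using $J_{N/2}(j)=0$:
\[
J_{N/2+2}(j)=\frac{N+2}{j}J_{N/2+1}(j),\qquad J_{N/2+3}(j)=\Big(\frac{(N+2)(N+4)}{j^2}-1\Big)J_{N/2+1}(j).
\]
Here $J_{N/2+1}(j)\neq0$, because $J_{N/2}$ and $J_{N/2+1}$ have no common positive zero, as shown before \eqref{Feq}. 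Dividing gives
\[
F_\nu(j)=\frac{(N+2)(N+4)-j^2}{N+2}.
\]
Therefore $F_\nu(j)>3$ if and only if $j^2<(N+1)(N+2)$. The same computation shows that $F_\nu(j)=3$ would force $j^2=(N+1)(N+2)$. So $\lambda_{0,2}=\lambda_{3,1}$ can only happen in that equality case, and the numerics below exclude it.

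\emph{The dimensional dichotomy.} This is the main obstacle, because it needs a quantitative bound on $j_{N/2,1}$. For large $N$, I would use the Euler--Rayleigh sums $\sigma_m=\sum_k j_{\nu,k}^{-2m}$. They are explicitly computable from the Taylor coefficients of $F_\nu$ in \eqref{ML}: $\sigma_2=\frac{1}{16(\nu+1)^2(\nu+2)}$ and $\sigma_3=\frac{1}{32(\nu+1)^3(\nu+2)(\nu+3)}$. They give the upper bound
\[
j_{\nu,1}^2<\frac{\sigma_2}{\sigma_3}=2(\nu+1)(\nu+3).
\]
With $\nu=N/2$ this reads $j_{N/2,1}^2<\tfrac12(N+2)(N+6)$, which is at most $(N+1)(N+2)$ as soon as $N\geq4$. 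The inequality is strict for $N\geq5$. For $N=4$ the bound is not strict, so I would instead check directly that $j_{2,1}^2\approx26.37<30$. If a sharper classical bound is preferred, such as $j_{\nu,1}^2<(\nu+1)(\nu+5)$, it settles all $N\geq3$ at once. For $N=2,3$ the criterion fails by direct numerical values: $j_{1,1}^2\approx14.68>12$ and $j_{3/2,1}^2\approx20.19>20$. In these two dimensions $\mu_3=\lambda_{0,2}$ and $E_3$ is radial.
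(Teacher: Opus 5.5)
Your proof is correct and has the same skeleton as the paper's. You reduce to $\lambda_{0,2}$ versus $\lambda_{3,1}$ through Lemma \ref{Lorder}, and you evaluate $F_\nu(j_{N/2,1})=N+4-j_{N/2,1}^2/(N+2)$ with $\nu=N/2+2$ via the three-term recurrence. You are also a bit more explicit than the paper about $J_{N/2+1}(j_{N/2,1})\neq0$ and about ruling out a tie $\lambda_{0,2}=\lambda_{3,1}$, which is good.

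Where you differ is in how you control the size of $j_{N/2,1}$.

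For $N\geq4$, the paper obtains $j_{\nu,1}^2<2(\nu+1)(\nu+3)$ by inserting $g(t)=t^\nu(1-t^2)$ into the Rayleigh quotient of the Dirichlet Bessel problem. You get the identical bound as $\sigma_2/\sigma_3$ from the Euler--Rayleigh sums. Your route needs the value of $\sigma_3$, which can be read off from the Taylor expansion of \eqref{ML}; the paper's is a short self-contained computation. In both versions the inequality is strict, so at $N=4$ one gets $j_{2,1}^2<30=(N+1)(N+2)$ directly, and your numerical check there is superfluous.

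For $N=2,3$, you quote tabulated zeros, whereas the paper gives checkable certificates. For $N=2$ it uses $j_{1,1}^4\geq 1/\sigma_2=192$. For $N=3$ it uses the monotonicity of $\tan\rho-\rho$ on $(\pi,3\pi/2)$ together with $\tan\sqrt{20}<\sqrt{20}$. Since the margin at $N=3$ is only about one percent, the latter is preferable.

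Finally, drop the aside about the ``sharper classical bound'' $j_{\nu,1}^2<(\nu+1)(\nu+5)$, because it is false. Already $j_{0,1}^2\approx5.78>5$ and $j_{2,1}^2\approx26.37>21$. At $\nu=3/2$ it would give $j_{3/2,1}^2<16.25<20$, contradicting the failure of the criterion at $N=3$ that you assert in the very next sentence.
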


\begin{proof}
Once $\lambda_{0,1}$, $\lambda_{1,1}$ and $\lambda_{2,1}$ are removed, Lemma \ref{Lorder} (i) and (v) show that $\mu_3$ is the minimum of $\lambda_{0,2}$ and $\lambda_{3,1}$. Let $\nu=\frac N2+2$ and $\rho_0=j_{N/2,1}=j_{\nu-2,1}$. By \eqref{zerosincrease}, $\rho_0<j_{\nu,1}$, so by Lemma \ref{Lorder} (ii) and the monotonicity of $F_\nu$ on $(0,j_{\nu,1})$, we have $\rho_{3,1}<\rho_0$ if and only if $F_\nu(\rho_0)>3$. Using \eqref{recurrence} twice,
\[
F_\nu(\rho)=2\nu-\frac{\rho J_{\nu-1}(\rho)}{J_\nu(\rho)},\qquad \rho J_\nu(\rho)=2(\nu-1)J_{\nu-1}(\rho)-\rho J_{\nu-2}(\rho).
\]
At $\rho=\rho_0$ we have $J_{\nu-2}(\rho_0)=0$, hence $\rho_0J_\nu(\rho_0)=2(\nu-1)J_{\nu-1}(\rho_0)$, and so
\[
F_\nu(\rho_0)=2\nu-\frac{\rho_0^2}{2(\nu-1)}=N+4-\frac{j_{N/2,1}^2}{N+2}.
\]
This is larger than $3$ if and only if $j_{N/2,1}^2<(N+1)(N+2)$, which proves \eqref{criterion}.

To show that \eqref{criterion} holds for $N\geq4$, we use the elementary estimate
\begin{equation}\label{jbound}
j_{\nu,1}^2<2(\nu+1)(\nu+3),\qquad \nu>0.
\end{equation}
Indeed, $j_{\nu,1}^2$ is the first eigenvalue of the Bessel operator
\[
-g''(t)-\frac1tg'(t)+\frac{\nu^2}{t^2}g(t)=\lambda g(t)\ \text{ on }(0,1),\qquad g(1)=0,
\]
whose eigenfunctions are $g(t)=J_\nu(\sqrt\lambda\,t)$, so that
\[
j_{\nu,1}^2=\min\frac{\displaystyle\int_0^1\left(|g'(t)|^2+\frac{\nu^2}{t^2}|g(t)|^2\right)t\,dt}{\displaystyle\int_0^1|g(t)|^2t\,dt}.
\]
Testing with $g(t)=t^\nu(1-t^2)$, which belongs to the form domain since $\nu>0$, and using $\int_0^1t^{2\nu+2j-1}dt=\frac{1}{2\nu+2j}$, we get
\begin{align*}
\int_0^1\left(|g'|^2+\frac{\nu^2}{t^2}g^2\right)t\,dt&=\frac{2\nu^2}{2\nu}-\frac{2\nu(\nu+2)+2\nu^2}{2\nu+2}+\frac{(\nu+2)^2+\nu^2}{2\nu+4}=\frac{2}{\nu+2},\\
\int_0^1g^2\,t\,dt&=\frac{1}{2\nu+2}-\frac{2}{2\nu+4}+\frac{1}{2\nu+6}=\frac{1}{(\nu+1)(\nu+2)(\nu+3)},
\end{align*}
so that the quotient equals $2(\nu+1)(\nu+3)$. The inequality in \eqref{jbound} is strict since $g$ is not an eigenfunction. Now with $\nu=N/2$, \eqref{jbound} gives
\[
j_{N/2,1}^2<\frac{(N+2)(N+6)}{2}\leq(N+1)(N+2)\quad\text{as soon as }N\geq4.
\]
It remains to check that \eqref{criterion} fails for $N=2$ and for $N=3$. For $N=2$ we use the classical Rayleigh sum
\[
\sum_{k\geq1}\frac{1}{j_{\nu,k}^4}=\frac{1}{16(\nu+1)^2(\nu+2)},\qquad \nu\geq0,
\]
see \cite[Chapter XV]{Wat44}, which gives $j_{\nu,1}^4\geq16(\nu+1)^2(\nu+2)$ and therefore
\[
j_{1,1}^2\geq\sqrt{192}=8\sqrt3=13.85\ldots>12=(N+1)(N+2).
\]
For $N=3$ we have $J_{3/2}(\rho)=\sqrt{\frac{2}{\pi\rho}}\left(\frac{\sin\rho}{\rho}-\cos\rho\right)$, so that $j_{3/2,1}$ is the first positive root of $\tan\rho=\rho$, which lies in $\left(\pi,\frac{3\pi}{2}\right)$. On this interval the function $\rho\mapsto\tan\rho-\rho$ is increasing, since its derivative is $\tan^2\rho\geq0$, and it tends to $+\infty$ at the right endpoint. At the point $\rho=\sqrt{20}\in\left(\pi,\frac{3\pi}{2}\right)$ we have $\tan\sqrt{20}=4.081\ldots<4.472\ldots=\sqrt{20}$, hence $j_{3/2,1}>\sqrt{20}$, that is,
\[
j_{3/2,1}^2>20=(N+1)(N+2).
\]
Finally, we identify $E_3$. By Lemma \ref{Lorder} (ii) we have $\rho_{1,2}>j_{\nu,1}=j_{N/2,1}=\rho_{0,2}$ with $\nu=N/2$, so that, using Lemma \ref{Lorder} (i) and $\lambda_{\ell,k}\geq\lambda_{\ell,2}$ for $k\geq2$,
\[
\lambda_{0,2}<\lambda_{1,2}\leq\lambda_{\ell,2}\leq\lambda_{\ell,k}\qquad\text{for all }\ell\geq1\text{ and }k\geq2.
\]
Together with Lemma \ref{Lorder} (v) this shows that $\mu_3$ is attained by exactly one pair $(\ell,k)$, namely $(3,1)$ when $N\geq4$ and $(0,2)$ when $N=2,3$, which gives the two descriptions of $E_3$. The proof is completed.
\end{proof}

We now explain the reason for this change between $N=3$ and $N=4$. It is a difference of scaling in $N$. In the sector of degree $\ell$, the harmonic polynomial $r^\ell Y_\ell$ is an admissible test function for the Neumann Rayleigh quotient, and it gives $\lambda_{\ell,1}\leq\frac{\ell(2\ell+N)}{R^2}$, which is linear in $N$; see Proposition \ref{Pbounds} below. On the other hand, a radial eigenfunction must oscillate in the variable $r$, and the term $\frac{N-1}{r}f'$ in \eqref{radialeq} makes its eigenvalue larger. Since $j_{\nu,1}>\nu$ \cite[Chapter XV]{Wat44}, we have $\lambda_{0,2}=\frac{j_{N/2,1}^2}{R^2}>\frac{N^2}{4R^2}$, and this is quadratic in $N$. Hence, in large dimensions, the first several distinct Neumann eigenvalues are $\lambda_{1,1}<\lambda_{2,1}<\lambda_{3,1}<\cdots$, and the first radial eigenvalue $\lambda_{0,2}$ appears much later in the list.

\subsection{Two-sided bounds and the low dimensions}
The eigenvalues $\lambda_{\ell,1}$ are not explicit. However, they can be estimated from above and from below by two elementary quantities, and these estimates are enough to describe the size of the sharp constants in all dimensions.

\begin{proposition}\label{Pbounds}
Let $N\geq2$. For every $\ell\geq1$,
\begin{equation}\label{twosided}
\frac{\ell(\ell+N-2)}{R^2}\leq\lambda_{\ell,1}\leq\frac{\ell(2\ell+N)}{R^2}.
\end{equation}
In particular,
\[
\frac{N-1}{R^2}\leq\mu_1=\frac{\alpha_N^2}{R^2}\leq\frac{N+2}{R^2},\qquad
\frac{2N}{R^2}\leq\mu_2=\frac{\beta_N^2}{R^2}\leq\frac{2N+8}{R^2},
\]
so that the sharp constants of Theorem \ref{TstabBall} satisfy
\[
\frac{N-2}{R^2}\leq\mu_2-\mu_1\leq\frac{N+9}{R^2},\qquad
\frac{N-2}{2N}\leq1-\frac{\mu_1}{\mu_2}\leq\frac{N+9}{2N+8}.
\]
Consequently
\[
\lim_{N\to\infty}\left(1-\frac{\mu_1}{\mu_2}\right)=\frac12.
\]
\end{proposition}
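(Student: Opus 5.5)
We prove the two bounds in \eqref{twosided} separately, using the Sturm--Liouville characterization of $\lambda_{\ell,1}$ as the bottom of the Rayleigh quotient
\[
Q_\ell(f)=\frac{\int_0^R\left(|f'|^2+\frac{\ell(\ell+N-2)}{r^2}f^2\right)r^{N-1}dr}{\int_0^R f^2r^{N-1}dr},
\]
already used in Lemma \ref{Lorder} (i). Equivalently, $\lambda_{\ell,1}$ is the minimum of $\int_{B_R}|\nabla u|^2/\int_{B_R}u^2$ over $u=f(r)Y_\ell(\omega)$. There is no Neumann constraint, since Neumann is the natural boundary condition for the form on $W^{1,2}$.

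\textbf{Lower bound.} Drop $|f'|^2\geq0$ and use $\frac{1}{r^2}\geq\frac{1}{R^2}$ on $(0,R)$. This gives $Q_\ell(f)\geq \ell(\ell+N-2)/R^2$ for every admissible $f$. Taking $f$ to be the eigenfunction yields $\lambda_{\ell,1}\geq\ell(\ell+N-2)/R^2$.

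\textbf{Upper bound.} Test with $f(r)=r^\ell$, i.e.\ with the harmonic polynomial $r^\ell Y_\ell$, which lies in $W^{1,2}(B_R)$. With $\int_0^R r^{2\ell+N-1}dr=\frac{R^{2\ell+N}}{2\ell+N}$ and $\int_0^R r^{2\ell+N-3}dr=\frac{R^{2\ell+N-2}}{2\ell+N-2}$, the numerator equals $(\ell^2+\ell(\ell+N-2))\frac{R^{2\ell+N-2}}{2\ell+N-2}=\ell\,R^{2\ell+N-2}$, since $\ell^2+\ell(\ell+N-2)=\ell(2\ell+N-2)$. Dividing by the denominator gives $Q_\ell(r^\ell)=\ell(2\ell+N)/R^2$.

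\textbf{Consequences.} By Theorem \ref{Tlevels}, $\mu_1=\lambda_{1,1}$ and $\mu_2=\lambda_{2,1}$. Setting $\ell=1,2$ in \eqref{twosided} gives $N-1\leq\alpha_N^2\leq N+2$ and $2N\leq\beta_N^2\leq2N+8$. Subtracting the bounds crosswise gives
\[
(2N)-(N+2)=N-2\leq\beta_N^2-\alpha_N^2\leq(2N+8)-(N-1)=N+9.
\]
For the ratio, write $1-\mu_1/\mu_2=(\mu_2-\mu_1)/\mu_2$. The lower bound follows from $1-\frac{\alpha_N^2}{\beta_N^2}\geq 1-\frac{N+2}{\beta_N^2}\geq1-\frac{N+2}{2N}=\frac{N-2}{2N}$.

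The upper bound needs care, because the crude quotient $\frac{N+9}{2N}$ is not the claimed $\frac{N+9}{2N+8}$. The map $x\mapsto 1-\frac{\alpha_N^2}{x}$ is increasing, so $1-\frac{\alpha_N^2}{\beta_N^2}\leq 1-\frac{\alpha_N^2}{2N+8}\leq1-\frac{N-1}{2N+8}=\frac{N+9}{2N+8}$. Combining the two, both sides tend to $\tfrac12$ as $N\to\infty$, which gives the limit.

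All steps are routine. The only points needing attention are the exact integral computation in the test-function step and handling the ratio bound monotonically rather than by dividing the separate bounds.
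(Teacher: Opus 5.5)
Your proposal is correct and follows the paper's proof essentially step for step: the same Rayleigh quotient $Q_\ell$, the same lower bound via $1/r^2>1/R^2$, the same test function $r^\ell Y_\ell$, and the same crosswise combination of the $\ell=1,2$ bounds for $\mu_2-\mu_1$ and for $\mu_1/\mu_2$. The only cosmetic difference is how you evaluate $Q_\ell(r^\ell)$: you compute the radial integrals directly, whereas the paper uses the harmonicity of $r^\ell Y_\ell$ and the Green formula $\int_{B_R}|\nabla u|^2dx=\int_{\partial B_R}u\,\partial_\nu u\,d\sigma$; both give $\ell(2\ell+N)/R^2$.
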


\begin{proof}
Both bounds in \eqref{twosided} come from the Rayleigh quotient of the degree $\ell$ sector,
\[
Q_\ell(f)=\frac{\displaystyle\int_0^R\left(|f'(r)|^2+\frac{\ell(\ell+N-2)}{r^2}|f(r)|^2\right)r^{N-1}dr}{\displaystyle\int_0^R|f(r)|^2r^{N-1}dr},
\]
whose minimum is $\lambda_{\ell,1}$. Since $0<r<R$ we have $\frac{1}{r^2}>\frac{1}{R^2}$, hence $Q_\ell(f)\geq\frac{\ell(\ell+N-2)}{R^2}$ for every admissible $f$, which is the lower bound. For the upper bound we test with $f(r)=r^\ell$, that is, with the harmonic polynomial $u=r^\ell Y_\ell$. Since $\Delta u=0$, the Green formula gives
\[
\int_{B_R}|\nabla u|^2dx=\int_{\partial B_R}u\,\partial_\nu u\,d\sigma=\ell R^{2\ell+N-2}\int_{\mathbb{S}^{N-1}}|Y_\ell|^2d\sigma,
\]
while
\[
\int_{B_R}|u|^2dx=\frac{R^{2\ell+N}}{2\ell+N}\int_{\mathbb{S}^{N-1}}|Y_\ell|^2d\sigma,
\]
so that the quotient equals $\frac{\ell(2\ell+N)}{R^2}$. Taking $\ell=1$ and $\ell=2$ in \eqref{twosided}, and using Theorem \ref{Tlevels}, we obtain the bounds for $\mu_1$ and $\mu_2$, hence those for $\mu_2-\mu_1$. For the gradient constant, $\frac{\mu_1}{\mu_2}\leq\frac{N+2}{2N}$ and $\frac{\mu_1}{\mu_2}\geq\frac{N-1}{2N+8}$, so that
\[
1-\frac{N+2}{2N}=\frac{N-2}{2N}\leq1-\frac{\mu_1}{\mu_2}\leq1-\frac{N-1}{2N+8}=\frac{N+9}{2N+8},
\]
and both ends tend to $\frac12$ as $N\to\infty$.
\end{proof}

In dimension three the Bessel functions involved are elementary, and in dimension two they are the classical Bessel functions of integer order. In both cases all the constants can be computed numerically.

\begin{corollary}\label{Cor23}
Let $R=1$.
\begin{enumerate}
\item[(i)] If $N=2$, then \eqref{Neumanneq} reads $J_\ell'(\rho)=0$, and $\alpha_2=1.84118\ldots$, $\beta_2=3.05423\ldots$ are the first positive zeros of $J_1'$ and of $J_2'$, while $j_{1,1}=3.83170\ldots$ is the first positive zero of $J_1$. Hence
\[
\mu_1=3.38995\ldots,\qquad \mu_2=9.32836\ldots,\qquad \mu_3=14.68197\ldots,
\]
and the four sharp constants of Theorems \ref{TstabBall} and \ref{TstabstabBall} are
\[
\mu_2-\mu_1=5.93840\ldots,\qquad 1-\frac{\mu_1}{\mu_2}=0.63659\ldots,
\]
\[
\mu_3-\mu_2=5.35360\ldots,\qquad \mu_1\left(\frac{1}{\mu_2}-\frac{1}{\mu_3}\right)=0.13251\ldots.
\]
\item[(ii)] If $N=3$, then \eqref{Neumanneq} is elementary: it reads $\tan\rho=\rho$ for $\ell=0$, $\tan\rho=\frac{2\rho}{2-\rho^2}$ for $\ell=1$, and $\tan\rho=\frac{\rho(\rho^2-9)}{4\rho^2-9}$ for $\ell=2$, with first positive roots $4.49340\ldots$, $\alpha_3=2.08157\ldots$ and $\beta_3=3.34209\ldots$ respectively. Hence
\[
\mu_1=4.33295\ldots,\qquad \mu_2=11.16959\ldots,\qquad \mu_3=20.19072\ldots,
\]
and the four sharp constants are
\[
\mu_2-\mu_1=6.83663\ldots,\qquad 1-\frac{\mu_1}{\mu_2}=0.61207\ldots,
\]
\[
\mu_3-\mu_2=9.02113\ldots,\qquad \mu_1\left(\frac{1}{\mu_2}-\frac{1}{\mu_3}\right)=0.17332\ldots.
\]
\end{enumerate}
In both cases $\mu_3$ is the first nonconstant radial level, and the next one is the first degree three level: $\mu_4=\lambda_{3,1}=17.64998\ldots$ when $N=2$, and $\mu_4=\lambda_{3,1}=20.37709\ldots$ when $N=3$, which is very close to $\mu_3=20.19072\ldots$.
\end{corollary}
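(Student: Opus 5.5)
The statement is Corollary \ref{Cor23}. Its structural content follows from Theorems \ref{Tlevels} and \ref{Tthird}, and what remains is numerical verification.

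\emph{Step 1: reduce \eqref{Neumanneq} in each dimension.} For $N=2$ we have $\nu=\ell$, and the Neumann condition $\rho J_\nu'(\rho)=(\frac N2-1)J_\nu(\rho)$ becomes $J_\ell'(\rho)=0$. Hence $\alpha_2$ and $\beta_2$ are the first positive zeros of $J_1'$ and $J_2'$. By Lemma \ref{Lorder} (iv), $\rho_{0,2}=j_{1,1}$. For $N=3$ we use the half-integer formulas
\[
J_{1/2}(\rho)=\sqrt{\tfrac{2}{\pi\rho}}\sin\rho,\qquad J_{3/2}(\rho)=\sqrt{\tfrac{2}{\pi\rho}}\Bigl(\tfrac{\sin\rho}{\rho}-\cos\rho\Bigr),
\]
together with the recurrence $\rho J_{\nu+1}=2\nu J_\nu-\rho J_{\nu-1}$ from \eqref{recurrence}. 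These give $J_{5/2}$ and $J_{7/2}$ explicitly in terms of $\sin\rho$ and $\cos\rho$. Substituting into $\ell J_{\ell+1/2}=\rho J_{\ell+3/2}$ for $\ell=1,2$, and into $J_{3/2}=0$ for $\ell=0$, then dividing by $\cos\rho$, yields the three stated tangent equations. This is a routine but error-prone algebraic check.

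\emph{Step 2: compute the roots and constants.} I would locate the roots by interval bisection, or by citing standard tables (Abramowitz--Stegun) for $j'_{1,1}, j'_{2,1}, j_{1,1}$. For $N=3$, each tangent equation is solved on the branch containing the first root. The value $\mu_3$ comes from Theorem \ref{Tthird}, which already identifies $\mu_3=\lambda_{0,2}=j_{N/2,1}^2$ for $N=2,3$. The four constants are then obtained by arithmetic from Theorems \ref{Tstab} and \ref{Tstabstab}.

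\emph{Step 3: identify $\mu_4$.} This is the only step needing an argument rather than numerics. After removing $\lambda_{0,1},\lambda_{1,1},\lambda_{2,1},\lambda_{0,2}$, the candidates for $\mu_4$ are:
\begin{itemize}
\item $\lambda_{3,1}$, since by Lemma \ref{Lorder} (v) the values $\lambda_{\ell,1}$ increase in $\ell$;
\item $\lambda_{1,2}$, since by Lemma \ref{Lorder} (i) we have $\lambda_{\ell,2}\ge\lambda_{1,2}$ for $\ell\ge1$, and $\lambda_{\ell,k}\ge\lambda_{\ell,2}$ for $k\ge2$;
\item $\lambda_{0,3}=j_{N/2,2}^2$.
\end{itemize}
By Lemma \ref{Lorder} (ii), $\rho_{1,2}>j_{N/2,1}$. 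Each of these candidates is an explicit root, so I compare them numerically.
\begin{itemize}
\item For $N=2$: $\rho_{3,1}=j'_{3,1}\approx4.2012$, so $\lambda_{3,1}\approx17.65$. This is below $\lambda_{1,2}=(j'_{1,2})^2\approx5.3314^2\approx28.4$ and below $j_{1,2}^2\approx49.2$.
\item For $N=3$: $\rho_{3,1}^2\approx20.377$, obtained from the explicit equation for $\ell=3$ derived as in Step 1. This is below $\rho_{1,2}^2$ (the second root of $\tan\rho=\frac{2\rho}{2-\rho^2}$, about $5.94^2\approx35.3$) and below $j_{3/2,2}^2\approx59.7$.
\end{itemize}
Hence $\mu_4=\lambda_{3,1}$ in both cases.

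The main obstacle is simply rigorous certification of the decimal digits. I would handle it by sign changes of the defining functions at rational endpoints, as was done for $\sqrt{20}$ in the proof of Theorem \ref{Tthird}.
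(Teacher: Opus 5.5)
Your proposal is correct and follows essentially the same route as the paper: you reduce \eqref{Neumanneq} to $J_\ell'(\rho)=0$ for $N=2$ and to the spherical Bessel (tangent) equations for $N=3$, take $\mu_1,\mu_2,\mu_3$ from Theorems \ref{Tlevels} and \ref{Tthird}, and identify $\mu_4$ as $\min\{\lambda_{3,1},\lambda_{1,2},\lambda_{0,3}\}$ via Lemma \ref{Lorder}. The only small difference is in the radial candidates $\lambda_{0,k}$, $k\geq3$: the paper discards them without numerics through the interlacing $\rho_{1,2}<j_{N/2,2}=\rho_{0,3}$ from Lemma \ref{Lorder} (ii) and (iv), so it needs only the single numerical comparison $\rho_{3,1}<\rho_{1,2}$, whereas your extra numerical comparison with $j_{N/2,2}^2$ is correct but unnecessary.
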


\begin{proof}
For $N=2$ we have $\frac N2-1=0$, so the Neumann condition $\rho J_\nu'(\rho)=\left(\frac N2-1\right)J_\nu(\rho)$ becomes $J_\ell'(\rho)=0$, and $J_0'=-J_1$ gives $\rho_{0,2}=j_{1,1}$. For $N=3$ we have $\nu=\ell+\frac12$, and $r^{-1/2}J_{\ell+1/2}(\rho r)$ is a spherical Bessel function, so that \eqref{Neumanneq} reduces to the three elementary equations displayed above. The numerical values are obtained from these equations, and the constants follow from Theorems \ref{Tlevels} and \ref{Tthird}. For the last statement, we note that $\lambda_{\ell,k}\geq\lambda_{1,2}$ for all $\ell\geq1$ and $k\geq2$ by Lemma \ref{Lorder} (i), that $\lambda_{\ell,1}>\lambda_{3,1}$ for $\ell\geq4$ by Lemma \ref{Lorder} (v), that $\rho_{1,2}<j_{N/2,2}=\rho_{0,3}\leq\rho_{0,k}$ for $k\geq3$ by Lemma \ref{Lorder} (ii) and (iv), and that $\rho_{3,1}<\rho_{1,2}$ numerically: $4.20118\ldots<5.33144\ldots$ when $N=2$ and $4.51409\ldots<5.94036\ldots$ when $N=3$. Hence $\mu_4=\lambda_{3,1}$ in both cases.
\end{proof}

\subsection{Proofs of Theorems \ref{TstabBall}, \ref{TstabstabBall} and \ref{TseriesBall}}
By Theorem \ref{Tlevels}, $\mu_1=\alpha_N^2/R^2$, $\mu_2=\beta_N^2/R^2$, and $\mathcal{M}_2$ is as described in Theorem \ref{TstabBall}. Therefore Theorem \ref{TstabBall} is Theorem \ref{Tstab}, with
\[
\mu_2-\mu_1=\frac{\beta_N^2-\alpha_N^2}{R^2},\qquad 1-\frac{\mu_1}{\mu_2}=1-\frac{\alpha_N^2}{\beta_N^2}.
\]
By Theorem \ref{Tthird}, $\mu_3$ is as described in Theorem \ref{TstabstabBall}, and Theorem \ref{TstabstabBall} is Theorem \ref{Tstabstab}, with
\[
\mu_3-\mu_2=\mu_3-\frac{\beta_N^2}{R^2},\qquad \mu_1\left(\frac{1}{\mu_2}-\frac{1}{\mu_3}\right)=\alpha_N^2\left(\frac{1}{\beta_N^2}-\frac{1}{R^2\mu_3}\right).
\]
Finally, Theorem \ref{TseriesBall} is Theorem \ref{Tseries}. \qed

The results of Section \ref{sec:general} do not use the shape of the domain. Hence Theorems \ref{TstabBall}, \ref{TstabstabBall} and \ref{TseriesBall} also hold on any bounded Lipschitz domain $\Omega\subset\mathbb{R}^N$, with $\mu_k$ the distinct Neumann eigenvalues of $\Omega$ and $E_k$ their eigenspaces. For the ball we have in addition $\dim E_1=N$, and all the levels are given by the roots of Bessel functions.

We also note that the method of Section \ref{sec:general} is specific to the exponent $p=2$. For $p\neq2$, the sharp constant of the $L^p$-Poincar\'e inequality is not an eigenvalue of a linear operator, and the optimizers, when they exist, do not form a linear space in general. For instance, when $p=1$, the sharp $L^1$-Poincar\'e inequality on a rectangle $\prod_{i=1}^N[a_i,b_i]$ was established in \cite[Appendix 1]{LW98}:
\[
\int_{\prod_{i=1}^N[a_i,b_i]}\left|f-\frac{1}{\prod_{i=1}^N(b_i-a_i)}\int_{\prod_{i=1}^N[a_i,b_i]}f\,dy\right|dx\leq\sum_{i=1}^N\frac{b_i-a_i}{2}\int_{\prod_{i=1}^N[a_i,b_i]}\left|\frac{\partial f}{\partial x_i}\right|dx,
\]
where each constant $\frac{b_i-a_i}{2}$ is sharp. This constant is not attained by Lipschitz functions, and the optimizing sequences converge to step functions. Hence the stability of this inequality needs a different approach, and we do not study it here.

We end this section with the Poincar\'e--Sobolev family
\[
\Lambda_q(B_R):=\inf\left\{\frac{\displaystyle\int_{B_R}|\nabla u|^2\,dx}{\left(\displaystyle\int_{B_R}|u|^q\,dx\right)^{2/q}}:\ u\in W^{1,2}(B_R)\setminus\{0\},\ \int_{B_R}u\,dx=0\right\},
\]
where $2\leq q\leq\frac{2N}{N-2}$ if $N\geq3$ and $2\leq q<\infty$ if $N=2$, whose endpoint $q=2$ is exactly \eqref{PoinBall}, that is, $\Lambda_2(B_R)=\mu_1(B_R)$. Here the situation is different, and the method of Section \ref{sec:general} does not apply, because for $q>2$ the deficit is no longer a quadratic form and the set of optimizers is no longer a linear space. By \cite{GW06}, for $q>2$ the optimizers are still axially symmetric and strictly monotone in the polar angle. Moreover, when $q$ is close to $2$, they are antisymmetric with respect to the equatorial hyperplane, and they are unique up to a rotation and a multiplicative constant. However, when $N=2$ and $q$ is large, this antisymmetry does not hold. Thus, for $q>2$, the set of optimizers is a nonlinear manifold, and it depends on $q$. To obtain a stability result of Bianchi--Egnell type \cite{BE91}, one would first need the nondegeneracy of this manifold. We do not study this problem here.

\section{The Gaussian measure: proof of Theorem \ref{TGauss}}\label{sec:gauss}

Let $\gamma$ be the standard Gaussian measure on $\mathbb{R}^n$ and let $L=\Delta-x\cdot\nabla$ be the Ornstein--Uhlenbeck operator. Then $L$ is self-adjoint on $L^2(\gamma)$ and, by integration by parts,
\[
\int_{\mathbb{R}^n}(-Lu)v\,d\gamma=\int_{\mathbb{R}^n}\nabla u\cdot\nabla v\,d\gamma.
\]
We apply Section \ref{sec:general} with $A=-L$, $\mathcal{E}(u)=\int_{\mathbb{R}^n}|\nabla u|^2d\gamma$, $\mathcal{D}=W^{1,2}(\mathbb{R}^n,\gamma)$ and $\bar u=\int u\,d\gamma$. It is well known that the spectrum of $-L$ is $\{0,1,2,\ldots\}$, and that the eigenspace $E_k$ of the eigenvalue $k$ is the span of the products $He_{k_1}(x_1)\cdots He_{k_n}(x_n)$ with $k_1+\cdots+k_n=k$, where $He_j$ is the Hermite polynomial of degree $j$ (with the probabilists' normalization $He_j''-xHe_j'=-jHe_j$), of dimension $\binom{n+k-1}{k}$; see \cite[Section 2.7]{BGL14}. Consequently
\[
\mu_k=k,\qquad \mathcal{M}_m=E_0\oplus\cdots\oplus E_m=\mathcal{P}_m,
\]
and, as in \eqref{decomp}, $u_k$ denotes the component of $u$ in $E_k$, that is, the projection of $u$ onto the $k$-th Wiener chaos. Thus, for every $u\in W^{1,2}(\mathbb{R}^n,\gamma)$,
\begin{equation}\label{chaos}
\inf_{c\in\mathbb{R}}\int_{\mathbb{R}^n}|u-c|^2d\gamma=\sum_{k\geq1}\|u_k\|_2^2,\qquad\int_{\mathbb{R}^n}|\nabla u|^2d\gamma=\sum_{k\geq1}k\|u_k\|_2^2,
\end{equation}
so that the Gaussian deficit is
\begin{equation}\label{gdeficit}
\delta_\gamma(u)=\sum_{k\geq2}(k-1)\|u_k\|_2^2.
\end{equation}
The first projections can be written explicitly. By the Gaussian integration by parts formula $\int_{\mathbb{R}^n}\partial_iu\,d\gamma=\int_{\mathbb{R}^n}x_iu\,d\gamma$, we have
\begin{equation}\label{Pi1}
\Pi_1u=\int_{\mathbb{R}^n}u\,d\gamma+\left(\int_{\mathbb{R}^n}u\,x\,d\gamma\right)\cdot x,\qquad \nabla(u-\Pi_1u)=\nabla u-\int_{\mathbb{R}^n}\nabla u\,d\gamma,
\end{equation}
and, integrating by parts twice, with $m_{ij}:=\int_{\mathbb{R}^n}\partial_{ij}u\,d\gamma=\int_{\mathbb{R}^n}u\,(x_ix_j-\delta_{ij})\,d\gamma$,
\begin{equation}\label{Pi2}
\Pi_2u=\Pi_1u+\frac12\sum_{i,j=1}^n m_{ij}\,(x_ix_j-\delta_{ij}).
\end{equation}
In particular, the infimum over $\mathbf{d}\in\mathbb{R}^n$ in \eqref{L2Gauss} is attained at $\mathbf{d}=\int\nabla u\,d\gamma=\int u\,x\,d\gamma$, so that
\[
\inf_{\mathbf{d}\in\mathbb{R}^n}\int_{\mathbb{R}^n}|\nabla u-\mathbf{d}|^2d\gamma=\int_{\mathbb{R}^n}|\nabla(u-\Pi_1u)|^2d\gamma=d_{\mathcal{E}}(u,\mathcal{P}_1)^2.
\]

\begin{proof}[Proof of Theorem \ref{TGauss}]
Since $\mu_k=k$, we have
\[
\mu_2-\mu_1=1,\qquad 1-\frac{\mu_1}{\mu_2}=\frac12,\qquad \mu_3-\mu_2=1,\qquad \mu_1\left(\frac{1}{\mu_2}-\frac{1}{\mu_3}\right)=\frac16,
\]
and in general $\mu_j-\mu_{j-1}=1$, $c_j=\frac{1}{j-1}-\frac1j=\frac{1}{(j-1)j}$. Then \eqref{L2Gauss} is Theorem \ref{Tstab}, \eqref{L2L2Gauss} and \eqref{GradGradGauss} are Theorem \ref{Tstabstab}, and \eqref{seriesGauss} is Theorem \ref{Tseries} after the change of index $j\mapsto j+1$. For the reader's convenience, we verify the two identities in \eqref{seriesGauss} directly. By \eqref{distspectral}, $\inf_{P\in\mathcal{P}_j}\int|u-P|^2d\gamma=\sum_{k>j}\|u_k\|_2^2$, and summing over $j\geq1$, the $k$-th component is counted exactly $k-1$ times, which gives \eqref{gdeficit}. Similarly, $\inf_{P\in\mathcal{P}_j}\int|\nabla(u-P)|^2d\gamma=\sum_{k>j}k\|u_k\|_2^2$ and
\[
\sum_{j=1}^{k-1}\frac{1}{j(j+1)}=1-\frac1k,
\]
so that
\[
\sum_{j\geq1}\frac{1}{j(j+1)}\sum_{k>j}k\|u_k\|_2^2=\sum_{k\geq2}k\left(1-\frac1k\right)\|u_k\|_2^2=\sum_{k\geq2}(k-1)\|u_k\|_2^2,
\]
as desired.
\end{proof}

We now show that the coefficients in the first identity of \eqref{seriesGauss} are all equal to $1$ exactly when the spectrum is an arithmetic progression.

\begin{proposition}\label{Parith}
In the setting of Section \ref{sec:general}, the identity
\[
\delta(u)=c\sum_{j\geq2}d_2(u,\mathcal{M}_{j-1})^2\qquad\text{for all }u\in\mathcal{D},
\]
holds for some constant $c>0$ if and only if $\mu_k=\mu_1+(k-1)c$ for all $k\geq1$, that is, if and only if the positive spectrum is an arithmetic progression with the common difference $c$. In particular, such an identity does not hold for the Neumann Laplacian on a Euclidean ball when $N=2$ or $N=3$.
\end{proposition}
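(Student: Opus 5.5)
The plan is to reduce the identity to a comparison of coefficients in the spectral decomposition. By \eqref{distspectral}, $d_2(u,\mathcal{M}_{j-1})^2=\sum_{k\geq j}\|u_k\|_2^2$. Exchanging the order of summation (all terms are nonnegative), we get
\[
c\sum_{j\geq2}d_2(u,\mathcal{M}_{j-1})^2=c\sum_{k\geq2}(k-1)\|u_k\|_2^2 .
\]
On the other hand, \eqref{deficit} gives $\delta(u)=\sum_{k\geq2}(\mu_k-\mu_1)\|u_k\|_2^2$.

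For the ``if'' direction, assume $\mu_k-\mu_1=(k-1)c$ for all $k$. Then the two expressions coincide term by term, so the identity holds for every $u\in\mathcal{D}$, with both sides finite.

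For the ``only if'' direction, assume the identity holds for all $u\in\mathcal{D}$. Fix $k\geq2$ and test with $u=\psi\in E_k\setminus\{0\}$, which belongs to $\mathcal{D}$ since eigenfunctions lie in the form domain. Then $u_k=\psi$ and all other components vanish, so the identity reduces to $(\mu_k-\mu_1)\|\psi\|_2^2=c(k-1)\|\psi\|_2^2$. Hence $\mu_k=\mu_1+(k-1)c$ for every $k\geq2$, and the case $k=1$ is trivial. There is no real obstacle here, since each eigenspace isolates a single coefficient. The only care needed is to justify the interchange of sums, which is legitimate because every term is nonnegative.

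For the ball with $N=2,3$, we use Corollary \ref{Cor23}. An arithmetic progression would force $\mu_2-\mu_1=\mu_3-\mu_2$, and therefore $\mu_3-\mu_2=\mu_2-\mu_1$. But the numerical values give $5.35360\ldots\neq5.93840\ldots$ when $N=2$, and $9.02113\ldots\neq6.83663\ldots$ when $N=3$.

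The only point to check is that these numerics are reliable enough to separate the two gaps. For $N=3$ a rigorous route avoids numerics: $\mu_3-\mu_2>\mu_2-\mu_1$ means $\mu_3>2\mu_2-\mu_1$. We have $\mu_3=j_{3/2,1}^2>20$ by the proof of Theorem \ref{Tthird}, while Proposition \ref{Pbounds} gives $2\mu_2-\mu_1\leq 2(2N+8)-(N-1)=3N+17=26$. That upper bound is too weak, so I would rather rely on the explicit elementary equations of Corollary \ref{Cor23}, where the gaps differ by a wide margin. I would therefore simply cite the certified numerical values in Corollary \ref{Cor23}.
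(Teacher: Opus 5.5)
Your proposal is correct and matches the paper's proof: the ``only if'' direction tests the identity on a single $\psi\in E_k\setminus\{0\}$ to isolate $\mu_k-\mu_1=c(k-1)$, and the ball case cites the unequal gaps $\mu_2-\mu_1\neq\mu_3-\mu_2$ from Corollary~\ref{Cor23}. The only cosmetic difference is that for the ``if'' direction the paper simply invokes \eqref{series}, whereas you redo the Tonelli interchange, which amounts to the same argument.
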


\begin{proof}
If $\mu_k=\mu_1+(k-1)c$, then $\mu_j-\mu_{j-1}=c$ for all $j\geq2$ and the identity is \eqref{series}. Conversely, let $u\in E_k$ with $k\geq2$. Then $\delta(u)=(\mu_k-\mu_1)\|u\|_2^2$, while $d_2(u,\mathcal{M}_{j-1})^2=\|u\|_2^2$ if $j\leq k$ and $=0$ if $j>k$. Hence the identity gives $\mu_k-\mu_1=c(k-1)$ for every $k\geq2$. On the ball with $N=2$ or $N=3$, Corollary \ref{Cor23} gives $\mu_2-\mu_1\neq\mu_3-\mu_2$, so that the positive spectrum is not an arithmetic progression.
\end{proof}

We conclude with a comparison of the two settings. In both cases, the equality set at the level $m$ is the finite dimensional linear space $\mathcal{M}_m$, and the union of the $\mathcal{M}_m$ is dense. On the ball, $\mathcal{M}_m$ is spanned by Bessel modes and the coefficients $\mu_j-\mu_{j-1}$ of the $L^2$-series are not given by any simple formula; for the Gaussian measure, $\mathcal{M}_m$ is the space of polynomials of degree at most $m$ and all the coefficients of the $L^2$-series are equal to $1$. The coefficients $c_j$ of the gradient series are in both cases the dimensionless numbers $\mu_1\left(\frac{1}{\mu_{j-1}}-\frac{1}{\mu_j}\right)$, which sum up to $1$ by \eqref{weightsum}. For the Gaussian measure they are $\frac12,\frac16,\frac1{12},\ldots$, and on the unit ball in $\mathbb{R}^3$ they are $0.61207\ldots$, $0.17332\ldots$, and so on, by Corollary \ref{Cor23}. Finally, on the ball the $L^2$-constants have the dimension $R^{-2}$, and the gradient constants depend only on $N$. In the Gaussian case there is no scaling parameter, and all the constants are rational numbers.

\noindent\textbf{Acknowledgements.} N. Lam was partially supported by an NSERC Discovery Grant. G. Lu was partially supported by grants from the Simons Foundation.


\begin{thebibliography}{99}

\bibitem{BGL14}
D. Bakry, I. Gentil, M. Ledoux,
Analysis and Geometry of Markov Diffusion Operators,
Grundlehren Math. Wiss., vol.~348, Springer, Cham, 2014.

\bibitem{BE91}
G. Bianchi, H. Egnell,
A note on the Sobolev inequality,
J. Funct. Anal. {\bf 100} (1991), 18--24.

\bibitem{BL85}
H. Brezis, E. H. Lieb,
Sobolev inequalities with remainder terms,
J. Funct. Anal. {\bf 62} (1985), 73--86.

 

\bibitem{CFLL24}
C. Cazacu, J. Flynn, N. Lam, G. Lu,
Caffarelli-Kohn-Nirenberg identities, inequalities and their stabilities,
J. Math. Pures Appl. (9) {\bf 182} (2024), 253--284.

\bibitem{CLT-JFA}
L. Chen, G. Lu, H. Tang, Sharp stability of log-Sobolev and Moser-Onofri inequalities on the sphere, J. Funct. Anal. {\bf 285} (2023), no. 5, Paper No. 110022, 24 pp.

\bibitem{CLT24}
L. Chen, G. Lu, H. Tang,
Stability of Hardy-Littlewood-Sobolev inequalities with explicit lower bounds,
Adv. Math. {\bf 450} (2024), Paper No. 109778, 28 pp.

\bibitem{CLT-MA}
L. Chen, G. Lu, H. Tang,
Optimal stability of Hardy-Littlewood-Sobolev and Sobolev inequalities of arbitrary orders with dimension-dependent constants,
Math. Ann. {\bf 394} (2026), no. 4, Paper No. 77, 46 pp.

\bibitem{CLT-AIM2025}
L. Chen, G. Lu, H. Tang,
Optimal asymptotic lower bound for stability of fractional Sobolev inequality and the global stability of log-Sobolev inequality on the sphere,
Adv. Math. {\bf 479} (2025), part B, Paper No. 110438, 32 pp.

 
\bibitem{CLTW}
L. Chen, G. Lu, H. Tang, B. Wang,
Asymptotically sharp stability of Sobolev inequalities on the Heisenberg group with dimension-dependent constants,
J. Math. Pures Appl. (9) {\bf 206} (2026), Paper No. 103832, 29 pp.

\bibitem{DFLL23}
A. X. Do, J. Flynn, N. Lam, G. Lu,
$L^{p}$-Caffarelli-Kohn-Nirenberg inequalities and their stabilities,
arXiv:2310.07083.

\bibitem{DLLN26}
A. X. Do, N. Lam, G. Lu, V. H. Nguyen,
Caffarelli-Kohn-Nirenberg and weighted Gaussian Poincar\'e inequalities: a complete characterization of sharp $L^2$ stability and $L^p$ extensions, arXiv:2606.08939.

\bibitem{DoLL26}
A. X. Do, N. Lam, G. Lu,
Sharp stability of the Heisenberg Uncertainty Principle: Second-order and curl-free field cases,
J. Funct. Anal. {\bf 290} (2026), no. 7, Paper No. 111321.

\bibitem{DEFFL}
J. Dolbeault, M. J. Esteban, A. Figalli, R. Frank, M. Loss,
Sharp stability for Sobolev and log-Sobolev inequalities, with optimal dimensional dependence,
Camb. J. Math. {\bf 13} (2025), no. 2, 359--430.

\bibitem{DLS07}
Y. Dong, G. Lu, L. Sun,
Global Poincar\'e inequalities on the Heisenberg group and applications,
Acta Math. Sin. (Engl. Ser.) {\bf 23} (2007), no. 4, 735--744.

\bibitem{DN25}
A. T. Duong, V. H. Nguyen,
On the stability estimate for the sharp second order uncertainty principle,
Calc. Var. Partial Differential Equations {\bf 64} (2025), no. 4, Paper No. 129, 24 pp.

\bibitem{Fat21}
M. Fathi,
A short proof of quantitative stability for the Heisenberg-Pauli-Weyl inequality,
Nonlinear Anal. {\bf 210} (2021), Paper No. 112403, 3 pp.

\bibitem{FIL16}
M. Fathi, E. Indrei, M. Ledoux,
Quantitative logarithmic Sobolev inequalities and stability estimates,
Discrete Contin. Dyn. Syst. {\bf 36} (2016), no. 12, 6835--6853.

\bibitem{FN19}
A. Figalli, R. Neumayer,
Gradient stability for the Sobolev inequality: the case $p \geq 2$,
J. Eur. Math. Soc. (JEMS) {\bf 21} (2019), no. 2, 319--354.

\bibitem{FZ-Duke}
 A. Figalli, Y. Zhang, Sharp gradient stability for the Sobolev inequality, Duke Math. J. {\bf 171} (2022), no. 12, 2407–2459.
 
\bibitem{FLW95a}
B. Franchi, G. Lu, R. L. Wheeden,
Representation formulas and weighted Poincar\'e inequalities for H\"ormander vector fields,
Ann. Inst. Fourier (Grenoble) {\bf 45} (1995), no. 2, 577--604.

\bibitem{FLW95b}
B. Franchi, G. Lu, R. L. Wheeden,
Weighted Poincar\'e inequalities for H\"ormander vector fields and local regularity for a class of degenerate elliptic equations,
Potential Anal. {\bf 4} (1995), no. 4, 361--375.

\bibitem{FLW96}
B. Franchi, G. Lu, R. L. Wheeden,
A relationship between Poincar\'e-type inequalities and representation formulas in spaces of homogeneous type,
Internat. Math. Res. Notices {\bf 1996}, no. 1, 1--14.

\bibitem{GW06}
P. Gir\~ao, T. Weth,
The shape of extremal functions for Poincar\'e-Sobolev-type inequalities in a ball,
J. Funct. Anal. {\bf 237} (2006), no. 1, 194--223.

\bibitem{GRO75}
L. Gross,
Logarithmic Sobolev inequalities,
Amer. J. Math. {\bf 97} (1975), 1061--1083.
\bibitem{Hen06}
A. Henrot,
Extremum Problems for Eigenvalues of Elliptic Operators,
Frontiers in Mathematics, Birkh\"auser Verlag, Basel, 2006.

\bibitem{Jerison-Duke}
D. Jerison,
The Poincar\'e inequality for vector fields satisfying H\"ormander's condition,
Duke Math. J. {\bf 53} (1986), no. 2, 503--523.

\bibitem{JL88}
D. Jerison, J. M. Lee,
Extremals for the Sobolev inequality on the Heisenberg group and the CR Yamabe problem,
J. Amer. Math. Soc. {\bf 1} (1988), no. 1, 1--13.

\bibitem{LLR25}
N. Lam, G. Lu, A. Russanov,
Stability of Gaussian Poincar\'e inequalities and Heisenberg Uncertainty Principle with monomial weights,
Math. Z. {\bf 312} (2026), no. 2, Paper No. 42.

\bibitem{LLR26}
N. Lam, G. Lu, A. Russanov,
Log-Sobolev and Beckner inequalities and stability of Poincar\'e inequality with weighted Gaussian measures,
arXiv:2604.16791.

\bibitem{Lu92}
G. Lu,
Weighted Poincar\'e and Sobolev inequalities for vector fields satisfying H\"ormander's condition and applications,
Rev. Mat. Iberoamericana {\bf 8} (1992), no. 3, 367--439.

\bibitem{Lu94}
G. Lu,
The sharp Poincar\'e inequality for free vector fields: an endpoint result,
Rev. Mat. Iberoamericana {\bf 10} (1994), no. 2, 453--466.

 

 

\bibitem{LW98}
G. Lu, R. L. Wheeden,
Poincar\'e inequalities, isoperimetric estimates and representation formulas on product spaces,
Indiana Univ. Math. J. {\bf 47} (1998), no. 1, 123--151.

\bibitem{MV21}
S. McCurdy, R. Venkatraman,
Quantitative stability for the Heisenberg-Pauli-Weyl inequality,
Nonlinear Anal. {\bf 202} (2021), Paper No. 112147, 13 pp.

\bibitem{VHN16}
V. H. Nguyen,
New approach to the affine P\'olya-Szeg\"{o} principle and the stability version of the affine Sobolev inequality,
Adv. Math. {\bf 302} (2016), 1080--1110.

\bibitem{Sz54}
G. Szeg\"{o},
Inequalities for certain eigenvalues of a membrane of given area,
J. Rational Mech. Anal. {\bf 3} (1954), 343--356.

\bibitem{Wat44}
G. N. Watson,
A Treatise on the Theory of Bessel Functions,
2nd ed., Cambridge University Press, Cambridge, 1944.

\bibitem{Wei56}
H. F. Weinberger,
An isoperimetric inequality for the $N$-dimensional free membrane problem,
J. Rational Mech. Anal. {\bf 5} (1956), 633--636.

\end{thebibliography}
\end{document}